\theoremstyle{plain}
\newtheorem{thm}{Theorem}[section]
\newtheorem{theorem}[thm]{Theorem}
\newtheorem{lemma}[thm]{Lemma}
\newtheorem{corollary}[thm]{Corollary}
\newtheorem{proposition}[thm]{Proposition}
\theoremstyle{definition}
\newtheorem{remark}[thm]{Remark}
\newtheorem{notation}[thm]{Notation}
\newtheorem{notation-definition}[thm]{Notation-Definition}
\newtheorem{notation-remark}[thm]{Notation-Remark}
\newtheorem{definition}[thm]{Definition}
\numberwithin{equation}{section}
\newcommand{\Sym}{{\rm Sym}}
\title{Normalization of congruence of bitangents to a hypersurface in $\mathbb P^3$}
\author{Hosung Kim and Yongnam Lee}
\address {Center for Complex Geometry\\
Institute for Basic Science (IBS)\\
55 Expo-ro, Yuseong-gu\\ 
Daejeon, 34126 Korea}
\email{hosung@ibs.re.kr}
\address {Center for Complex Geometry\\
Institute for Basic Science (IBS)\\
55 Expo-ro, Yuseong-gu\\ 
Daejeon, 34126 Korea,  and 
\newline\hspace*{3mm} Department of Mathematical Sciences\\
KAIST\\
291 Daehak-ro, Yuseong-gu\\ 
Daejeon, 34141 Korea}
\email{ynlee@kaist.ac.kr}
\thanks{2010 Mathematics Subject Classification. Primary 14J25; Secondary 14C21, 14D06, 14M15.\\ 
Key words: Fano congruence, bitangents, Lefschetz pencil\\
Lee was partly supported by Samsung Science and Technology Foundation under Project Number SSTF-BA1701-04. Kim was supported by the Institute for Basic Science (IBSR032-D1).
Lee would also like to  acknowledge the support and hospitality of KIAS when he visited as Affiliate Professor.}
\date{\today}
\begin{document}

\begin{abstract}A congruence is a surface in the Grassmannian ${\rm Gr}(2, 4)$.  In this paper, we consider the normalization of congruence of bitangents to a hypersurface in $\mathbb P^3$. We call it the Fano congruence of bitangents. We give a criterion for smoothness of the Fano congruence of bitangents and describe explicitly their degenerations in a general Lefschetz pencil in the space of hypersurfaces in $\mathbb P^3$.\end{abstract}

\maketitle

\section{Introduction}
Throughout this paper we will work over the field of complex numbers. 

Let ${\rm Gr}(2,4)$ be  the  Grassmannian of lines in $\mathbb P^3$. A surface in ${\rm Gr}(2,4)$ is called a congruence (cf. \cite{ABT01}).
Let $Y$ be a  hypersurface of degree $d\geq 4$ in $\mathbb P^3$. By the congruence of bitangents to $Y$ we mean the space $B(Y)$  of  lines in $\mathbb P^3$ which is tangent to $Y$ at two points.  Indeed $B(Y)$ is a smooth irreducible surface if $Y$ is a smooth quartic and contains no line (cf \cite{T80}). 
For $d\geq 5$, if $Y$ has only isolated singularities and its dual variety $Y^*\subset (\mathbb P^3)^*$ is an irreducible hypersurface, then $B(Y)$ is a congruence (see Proposition~\ref{p.congruence}). It is well known that $B(Y)$ is not smooth unless $Y$ is a quartic surface even when $Y$ is general (see Lemma 4.3 in \cite{ABT01}).

In this paper we are going to study  the Fano congruence of bitangents to a hypersurface of degree $d\geq 5$ in $\mathbb P^3$  defined as follows:

\begin{definition}\label{d.Fano congruence}
	Let $Y$ be a  hypersurface in $\mathbb P^3$ of degree $d\geq 5$. The Fano congruence  of bitangents to $Y$ is the space of  $P=(\ell,p_1+p_2, q_1+\cdots+q_{d-4})$ with $\ell\in {\rm Gr}(2,4)$ and $p_i,q_j\in\ell$ such that $Y\cap\ell$ contains $2p_1+2p_2+q_1+\cdots+q_{d-4}$ as a subscheme. We denote the Fano congruence of bitangents to $Y$ by $S(Y)$ in this paper.
\end{definition} Let $\phi_Y:S(Y)\rightarrow B(Y)$ be the natural morphism. Then  $\phi_Y$ is a finite morphism if $Y$ contains no line. 
The space $B(Y)$ had been studied in detail in \cite{ABT01} and \cite{KNT}. But in this paper, we focus on  $S(Y)$ instead of $B(Y)$ because $S(Y)$ is a smooth projective surface for general $Y$ (see Proposition~\ref{p.general smooth}) and their degenerations in a general Lefschetz pencil in the space of hypersurfaces $Y$ can be described (see Theorem~\ref{t.main}). In particular, special fiber in a general Lefschetz pencil admits at worst isolated singularities or has  double points of rank two along a smooth irreducible curve.
For arbitrary $Y$,  $S(Y)$ is smooth at any point $P$ such that $\ell\not\subset Y$, $Y$ is smooth at $p_1$ and $p_2$, and $\{p_1,p_2\}\cap \{q_1,\ldots,q_{d-4}\}=\emptyset$ (see Theorem \ref{t.smooth}). 

The subject of this paper is to prove the following.
\begin{theorem}\label{t.main}Let $\pi_L:\mathcal Y=\{Y_b\}_{b\in L}\rightarrow L$ be a general Lefschetz pencil of hypersurfaces of degree $d\geq 5$ in $\mathbb P^3$ and  
	let $b_1,\ldots,b_N\in L$ be the singular values of $\pi_L$. Then each $b\in L$, $S(Y_b)$ is a projective surface with singularities as follows.  
	\begin{itemize}	 
		\item[(i)] If $S(Y_b)$ is singular at some point  $P=(\ell,p_1+p_2,q_1+\cdots+q_{d-4})\in S(Y_b)$ such that    $Y_b$ is smooth at $p_1$ and $p_2$, then $P$ is an isolated singular point of $S(Y)$, and the followings are  satisfied:
		\begin{itemize}
			\item [(a)] $p_1\neq p_2$,  $\{p_1,p_2\}\cap \{q_1,\ldots,q_{d-4}\}\neq\emptyset$, and $q_1,\ldots,q_{d-4}$ are distinct.  
			\item [(b)] if $p_1\in\{q_1,\ldots,q_{d-4}\}$ and $p_2\not\in \{q_1,\ldots,q_{d-4}\}$, then  $\hat T_{p_1}Y_b=\hat T_{p_2}Y_b$ and $\hat T_{p_1}Y_b\cap Y_b$ is a plane curve with a cuspidal singularity at $p_1$. Here $\hat T_{p_i}Y_b\subset \mathbb P^3$ denotes the embedded tangent space of $Y_b$ at $p_i$. 
		\end{itemize}
		\item[(ii)] For each $1\leq J\leq N$, let $p_J$ be the node of $Y_{b_J}$ and let  $\Gamma_J$ be the space of  $P=(\ell,p_1+p_2,q_1+\cdots+q_{d-4})$ with $p_J\in\{p_1,p_2\}$ and $P\in S(Y_{b_J})$. Then $\Gamma_J$ is a smooth projective curve and  $S(Y_{b_J})$ has  double points of rank two along  $\Gamma_J$. 
		\item[(iii)]for all $b\in L\setminus \{b_1,\ldots,b_N\}$, $\phi_{Y_b}:S(Y_b)\rightarrow B(Y_b)$ is the normalization. 
	\end{itemize}
\end{theorem}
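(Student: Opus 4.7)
The plan is to treat the three parts of Theorem~\ref{t.main} separately, combining the pointwise smoothness criterion of Theorem~\ref{t.smooth} with genericity of the Lefschetz pencil and local analysis in a chart of the flag Hilbert scheme. Part (iii) is the quickest: for $b\notin\{b_1,\ldots,b_N\}$ the hypersurface $Y_b$ is smooth, so Proposition~\ref{p.general smooth} together with genericity of the pencil gives that $S(Y_b)$ is smooth. The morphism $\phi_{Y_b}$ is finite (since a general pencil avoids the codimension-$\geq 2$ locus of hypersurfaces containing a line) and is an isomorphism over the open set of bitangents whose residual $d-4$ intersection points are distinct and disjoint from the two tangent points, hence birational; a finite birational morphism from a smooth surface to a reduced irreducible one is the normalization.

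For part (i), fix a singular point $P\in S(Y_b)$ with $Y_b$ smooth at $p_1,p_2$. Genericity rules out $\ell\subset Y_b$, so Theorem~\ref{t.smooth} forces $\{p_1,p_2\}\cap\{q_1,\ldots,q_{d-4}\}\neq\emptyset$. To obtain statement (a) I would form the universal Fano-congruence incidence $\mathcal I=\{(Y,P):P\in S(Y)\}$ over the projective space of hypersurfaces, stratify by the coincidence pattern among $\{p_1,p_2,q_1,\ldots,q_{d-4}\}$, and dimension-count: the strata $\{p_1=p_2\}$ (a hyperflex at a smooth point) and $\{q_i=q_j\}$ each project to loci of codimension $\geq 2$ in the space of hypersurfaces, which a general Lefschetz pencil misses. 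The same type of count, applied to the stratum where $S(Y)$ carries a positive-dimensional singular set disjoint from $\mathrm{Sing}(Y)$, shows it has codimension $\geq 2$, which gives isolation of $P$.

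For statement (b), assume $p_1=q_j$ and $p_2\notin\{q_i\}$, so $\ell$ has contact $3$ with $Y_b$ at $p_1$ and $2$ at $p_2$. I would write $Y_b$ in local coordinates adapted to $\ell$ near each tangent point, express $S(Y_b)$ on a chart of the flag Hilbert scheme by explicit equations, and show that the singularity condition at $P$ is equivalent to the vanishing of a $2\times 2$ determinant whose rows encode $\hat T_{p_1}Y_b$ and $\hat T_{p_2}Y_b$ as elements of $T_\ell\mathrm{Gr}(2,4)$; this is precisely $\hat T_{p_1}Y_b=\hat T_{p_2}Y_b$. The cuspidal singularity of $\hat T_{p_1}Y_b\cap Y_b$ at $p_1$ then follows from $\ell\subset\hat T_{p_1}Y_b$ having triple contact with $Y_b$ at $p_1$ while $\ell\not\subset Y_b$. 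For part (ii), the node at $p_J$ forces every line through $p_J$ to meet $Y_{b_J}$ with multiplicity $\geq 2$ at $p_J$, so $\Gamma_J$ is identified with the space of lines through $p_J$ tangent to $Y_{b_J}$ at a second smooth point; projection from $p_J$ realises $\Gamma_J$ as an open subset of the branch curve of the degree-$(d-2)$ cover $Y_{b_J}\setminus\{p_J\}\to\mathbb P^2$, and smoothness of this branch curve along $\Gamma_J$ follows from genericity of the pencil by a Bertini argument. For the double-point assertion, at $P\in\Gamma_J$ with the $q_i$'s pairwise distinct and disjoint from $p_2$, I would choose analytic coordinates on $\mathbb P^3$ with $p_J=0$ and local equation of $Y_{b_J}$ of the form $xy+\text{(higher order)}$, parametrize nearby lines through $p_J$ by two parameters, and write out the equations cutting out $S(Y_{b_J})$ on the appropriate chart of the flag Hilbert scheme. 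The node contributes an explicit rank-$2$ quadratic term (essentially the pullback of $xy$), which I would verify to be the full leading quadratic form of $S(Y_{b_J})$ at $P$.

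The main obstacle will be the local computation in part (ii): organizing the equations of $S(Y_{b_J})$ on a suitable chart of the flag Hilbert scheme so that the leading quadratic terms are transparent, and then verifying that the node contribution gives a genuine rank-$2$ quadric uniformly along $\Gamma_J$, with no additional quadratic terms from the other tangency conditions that could raise or lower the rank. The analogous computation for part (i)(b) is similar in spirit but lighter, since there a single isolated Hessian suffices and only its non-degeneracy as a coincidence-of-tangent-planes condition is needed.
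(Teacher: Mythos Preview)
Your approach to part (iii) contains a genuine error. You claim that for $b\notin\{b_1,\ldots,b_N\}$, Proposition~\ref{p.general smooth} together with genericity of the pencil gives that $S(Y_b)$ is smooth. But Proposition~\ref{p.general smooth} only asserts smoothness of $S(Y)$ for a \emph{general} $Y$, and the locus of $Y$ for which $S(Y)$ acquires a singularity (with $Y$ smooth at the relevant tangent points) is a priori only of codimension $\geq 1$ in $W$; indeed Proposition~\ref{p.isolated singularity} bounds the incidence $\mathcal U$ by $\dim W-1$, not $\dim W-2$. A general pencil will therefore meet this locus in finitely many points, and at those points $S(Y_b)$ is genuinely singular even though $Y_b$ is smooth---this is precisely the content of part (i) of the theorem. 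The paper's proof of (iii) does not claim smoothness: instead it observes that $S(Y_b)$ is cut out by a section of a rank-$d$ bundle on $\mathcal F$ and hence is a local complete intersection, and combines this with the isolated-singularity statement of (i) to deduce via Serre's criterion that $S(Y_b)$ is \emph{normal}. Finite birationality of $\phi_{Y_b}$ (which still requires checking that $S_{2^3,1^{d-6}}(Y_b)$ and $S_{3,2,1^{d-5}}(Y_b)$ are $1$-dimensional for every $Y_b$ in the pencil, not just generically) then gives the normalization.

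There is also a gap in your argument for (i)(b). You write that the cuspidal singularity of $\hat T_{p_1}Y_b\cap Y_b$ at $p_1$ follows simply from $\ell$ having triple contact with $Y_b$ at $p_1$. But the plane curve $C=\hat T_{p_1}Y_b\cap Y_b$ already has multiplicity $\geq 2$ at $p_1$ (since the plane is tangent there), and triple contact of $\ell$ with $Y_b$ only says $\ell$ is one of the branch tangents of $C$ at $p_1$; this does not distinguish a node from a cusp. In the paper's computation (Theorem~\ref{t.singular locus}), the singularity of $S(Y_b)$ at $P$ forces the rank of a $4\times 3$ matrix $M_Y$ to drop, and since $Y_b$ is smooth at $p_1,p_2$ this forces the proportionality of the three vectors $(x_{0,d-1,0},y_{0,d-1,0,0})$, $(x_{d-1,0,0},y_{d-1,0,0,0})$, $(x_{1,d-2,0},y_{1,d-2,0,0})$. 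The first two give $\hat T_{p_1}Y_b=\hat T_{p_2}Y_b$, but the third is an additional condition that collapses the tangent cone of $C$ at $p_1$ to a single direction---this is what produces the cusp, and it does not follow from the triple-contact condition alone.

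Your outlines for the dimension counts in (i)(a) and for (ii) are in the right spirit and broadly parallel the paper's arguments, though the paper handles smoothness of $\Gamma_J$ more cleanly by showing the universal incidence $\mathcal H$ is a smooth projective bundle over $\mathcal G$, so that a general fiber of $\psi_1$ is smooth.
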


In order to obtain the above theorem, we consider $S(Y)$ as a subscheme of the fiber product of  two projective vector bundles
$$\mathcal F=\mathbb P(\Sym^2\mathcal I^{\vee})\times_{{\rm Gr}(2, 4)}\mathbb P(\Sym^{d-4}\mathcal I^{\vee})$$ on ${\rm Gr}(2,4)$ where $\mathcal I^{\vee}$ is the dual of the tautological vector bundle $\mathcal I$ on ${\rm Gr}(2,4)$. We also combine local computational analysis with global methods from intersection theory.
\bigskip

{\bf  Notation and Convention} In this paper we will use the following notational conventions.
\begin{itemize}\item[(i)] For a vector space $V$, the projective space $\mathbb P(V)$ is the space of 1-dimensional subspaces of $V$. 
\item[(ii)] Let  $Y$ be a  degree $d$ hypersurface of $\mathbb P^n$.  For a finite sequence of nonnegative   integers $a_1\geq \cdots\geq a_r$ with $\sum a_i=d$, we denote by  $S_{a_1,\ldots,a_r}(Y)\subset {\rm Gr}(2,n+1)$  the space of lines $\ell$ in $\mathbb P^n$ such that  $\ell\cap Y$ contains $a_1p_1+\cdots+a_rp_r$ as a subscheme for some $p_i\in \ell$.

For any positive integers $n_1,\ldots,n_r$, we will also use the notation    $S_{a_1^{n_1},a_2^{n_2},\ldots,a_r^{n_r}}(Y)$ to denote $$S_{a_1,\ldots,a_1,a_2,\ldots,a_2,\ldots,a_r,\ldots,a_r}(Y)$$ here each $a_i$ occurs $n_i$ times.  For example,  $B(Y)=S_{2^2,1^{d-4}}(Y)$. 
\item[(iii)]Let $\mathbb C[t_0,t_1]_d$ be the space of homogeneous polynomials  of degree $d$ in variables $t_0$ and $t_1$. For finite subsets $A_1,\ldots,A_r\subset  \mathbb C[t_0,t_1]_d$ we write $<A_1,\ldots,A_r>$ to denote the subspace of $\mathbb C[t_0,t_1]_d$ spanned by $\cup A_i$. 
\end{itemize}

\medskip
 \section{Fano congruence of bitangents}\label{s.Fano congruence of bitangents}

We use the same notations as in the introduction. 
\begin{notation}\label{n.family of normalization}Let $d$ be an integer with $d\geq 5$. 
Let  $G={\rm Gr}(2,4)$ be the Grassmannian of lines in $\mathbb P^3$ and let $\mathcal I^{\vee}$ be the dual of the tautological vector bundle $\mathcal I$ on $G$. 
Then  $$\mathcal F=\mathbb P(\Sym^2\mathcal I^{\vee})\times_G\mathbb P(\Sym^{d-4}\mathcal I^{\vee})$$   parametrizes  $P=(\ell, p_1+p_2, q_1+\cdots+q_{d-4})$ with $\ell\in G$ and $ p_i,q_j\in \ell$.  
 
 Let $W$ be the space of hypersurfaces of degree $d$ in $\mathbb P^3$. Then $W\cong\mathbb P^{\frac{d^3+6d^2+11d}{6}}$. Let $\mathcal S$ be the subscheme of $W\times \mathcal F$ parametrizing \begin{align*}(Y,P)\in W\times \mathcal F\end{align*}
such that $P=(\ell, p_1+p_2, q_1+\cdots+q_{d-4})$ with  $Y\cap \ell$ contains $2p_1+2p_2+\sum q_i$ as a subscheme.  
We denote by $\psi:{\mathcal S}\rightarrow W$, $\rho:\mathcal S\rightarrow \mathcal F$ and $\phi: {\mathcal S}\rightarrow G$  the natural projection morphisms:
\begin{center}
\begin{tikzpicture}
	\matrix (m) [matrix of math nodes,row sep=3em,column sep=3em,minimum width=2em]
	{
		& \mathcal S&  & \\
		W& &	\mathcal F& \\ 
		&G & \\};
	\path[-stealth]
	(m-1-2) edge node [left] {$\psi$} (m-2-1)
	(m-1-2) edge node [right] {$\rho$} (m-2-3)
	(m-1-2) edge node [right] {$\phi$} (m-3-2)
	(m-2-3) edge node [right] {} (m-3-2);
\end{tikzpicture}
\end{center}
 For  $Y\in W$   we have    $\psi^{-1}(Y)=S(Y)$ and $\phi(\psi^{-1}(Y))=B(Y)$.   Let us denote by  $\phi_Y: S(Y)\rightarrow B(Y)$ the  restriction of $\phi$. 
\end{notation}

\begin{proposition}\label{p.congruence}
	Let $Y$ be a  hypersurface of degree $d\geq 5$ admitting only isolated singularities. Assume that the dual variety $Y^*\subset (\mathbb P^3)^*$ of $Y$ is an irreducible hypersurface.  Then  $B(Y)$  has dimension 2. 
\end{proposition}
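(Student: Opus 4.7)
The plan is to bound $\dim B(Y)$ from above and below by $2$, using as an intermediate object the tangent hypersurface $T(Y)\subset {\rm Gr}(2,4)$ consisting of lines meeting $Y$ with multiplicity $\geq 2$ at some smooth point. As the discriminant of the restriction $\ell\mapsto F|_\ell$ of the defining polynomial $F$ of $Y$, the variety $T(Y)$ is a hypersurface in the $4$-dimensional Grassmannian, so $\dim T(Y)=3$.

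For the upper bound, introduce the incidence
\[
C=\{(p,\ell)\in Y_{\rm sm}\times {\rm Gr}(2,4):p\in\ell\subset \hat T_pY\},
\]
a $\mathbb{P}^1$-bundle over $Y_{\rm sm}$ (with fiber over $p$ being the pencil of lines through $p$ in $\hat T_pY$); in particular $\dim C=3$. Let $\pi:C\to T(Y)$ denote the projection. The hypothesis that $Y^*$ is an irreducible hypersurface is equivalent, by the Monge--Segre--Wallace biduality $Y^{**}=Y$ in characteristic zero, to the Gauss map $\gamma:Y_{\rm sm}\dashrightarrow Y^*$, $p\mapsto\hat T_pY$, being birational; in particular $Y$ is not developable. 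Consequently, a general tangent line $\ell\in T(Y)$ has a unique tangency point (its intersection with $Y$ being of the form $2p_0+p_3+\cdots+p_d$ with the remaining $p_i$ simple and transverse), so $\pi$ is generically injective. The bitangent locus $B(Y)$, being exactly the subset of $T(Y)$ over which $\pi$ has fibers of size $\geq 2$, is then a proper subvariety of $T(Y)$, giving $\dim B(Y)\leq 2$.

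For the lower bound, parametrize bitangents (away from flex lines) by
\[
B'=\{(p_1,p_2)\in (Y_{\rm sm}\times Y_{\rm sm})\setminus\Delta:p_2\in \hat T_{p_1}Y,\ p_1\in \hat T_{p_2}Y\},
\]
which maps to $B(Y)$ via $(p_1,p_2)\mapsto\overline{p_1p_2}$, generically $2$-to-$1$ through the $S_2$-swap. Projecting to the first factor, the fiber over a general $p_1\in Y_{\rm sm}$ is the intersection in $Y$ of two effective divisors: the plane section $\hat T_{p_1}Y\cap Y$ and the pullback $\gamma^{-1}(\check p_1\cap Y^*)$, where $\check p_1\subset(\mathbb{P}^3)^*$ is the plane of hyperplanes through $p_1$. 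Using that $\gamma$ is generically finite and $Y^*\subset (\mathbb{P}^3)^*$ is an irreducible hypersurface, these two divisors on $Y$ are distinct, and for $d\geq 5$ their intersection number exceeds the local contribution at $p_1$, yielding at least one bitangency point $p_2\neq p_1$ over a general $p_1$. This gives $\dim B'=2$ and hence $\dim B(Y)\geq 2$.

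The main obstacle is the upper-bound step: cleanly deducing generic injectivity of $\pi$ from the birationality of the Gauss map. One must invoke reflexivity in the precise form that $Y^*$ irreducible hypersurface is equivalent to $\gamma$ birational, and verify that the isolated singularities of $Y$ (which a generic tangent line avoids) do not interfere with the incidence analysis carried out on $Y_{\rm sm}$. Once these are in place, the argument reduces to saying that a generic tangent line can only be tangent at one point of $Y_{\rm sm}$, with any second tangency cutting out a proper closed subvariety of $T(Y)$.
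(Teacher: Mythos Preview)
Your overall architecture---bound $\dim B(Y)$ above by showing that the generic tangent line is only simply tangent---is the same as the paper's. The gap is in the step you yourself flag as the main obstacle: you deduce generic injectivity of $\pi:C\to T(Y)$ from birationality of the Gauss map, and that implication is not valid. Birationality of $\gamma$ says that a general tangent \emph{plane} $\hat T_pY$ touches $Y$ at a single point. A bitangent \emph{line} $\ell$ at $p_1,p_2$ only requires $\ell\subset \hat T_{p_1}Y\cap \hat T_{p_2}Y$, and two distinct planes in $\mathbb P^3$ always meet in a line; nothing about $\gamma$ forbids this. So ``$\gamma$ birational $\Rightarrow$ a general tangent line has a unique tangency'' is unsupported, and without it the upper bound collapses.

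The paper closes this gap by a different route. After checking that $\mathrm{Tan}(Y)$ is irreducible of dimension $3$ (so a $3$-dimensional component of $B(Y)$ would force \emph{every} tangent line to be bitangent), it uses the hypothesis on $Y^*$---via a Bertini-type irreducibility statement for the family of hyperplane sections---to ensure that for a general smooth $p$ the plane curve $C_p=\hat T_pY\cap Y$ is irreducible. Any line through $p$ in $\hat T_pY$ that is bitangent to $Y$ at a second point $q$ forces $q\in C_p$ and $\ell$ tangent to $C_p$ at $q$; hence if every tangent line were bitangent, the finite map $\nu:\tilde C_p\to\mathbb P^1$ obtained by projecting the normalization from $p$ would be branched over every point of $\mathbb P^1$, which is impossible. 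This projection argument is what replaces your appeal to the Gauss map.

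On the lower bound: the paper's proof in fact only establishes $\dim B(Y)\le 2$ and is silent on $\dim B(Y)\ge 2$, so your attempt there goes beyond what is written. Your sketch is plausible, but as stated it needs the local intersection multiplicity at $p_1$ of the two divisors to be controlled, which you assert without computation.
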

\begin{proof} Let ${\rm Tan}(Y)=S_{2,1^{d-2}}(Y)\subset G$ be the space of lines in $\mathbb P^3$ meeting  $Y$ with multiplicity $\geq 2$ at some point. We first claim that ${\rm Tan}(Y)$ is irreducible of dimension 3. 
	Let $\mathcal T\subset G\times Y$ be the space of $(\ell, p)\in G\times Y$ such that $\ell$ meets $Y$ with multipicity $\geq 2$ at $p$. For each point $p\in Y$, the fiber of the projection morphism $\mathcal T\rightarrow Y$ over $p$ is isomorphic to $\mathbb P^1$ if $Y$ is smooth at $p$, and to $\mathbb P^2$ if $Y$ is singular at $p$.   It follows that  $\mathcal T$ is irreducible of dimension 3 because $Y$ has only isolated singular points. Therefore the image of the projection $\mathcal T\rightarrow G$ which is equal to ${\rm Tan}(Y)$ is irreducible of dimension 3.

	Let $\mathcal K$ be the space of pairs $(p,H)\in Y\times (\mathbb P^3)^*$ such that $p\in H$. Let $\phi_1:\mathcal K\rightarrow Y$ and $\phi_2:\mathcal K\rightarrow (\mathbb P^3)^*$ be the natural projection morphisms. 
	Each fiber of $\phi_1$ is isomorphic to $\mathbb P^2$, and hence $\mathcal K$ is irreducible since $Y$ is irreducible. Therefore for general $H$ outside codimension 2 subscheme of $(\mathbb P^3)^*$, the fiber $\phi_2^{-1}(H)=H\cap Y$ is irreducible (see \cite{BK20}). Therefore when we take a general $H\in Y^*$ so that $H=\hat T_pY$ for some smooth point $p\in Y$, the intersection $C_p:=\hat T_pY\cap Y$ is an irreducible plane curve.

 Suppose that $B(Y)$ has an irreducible  component $D$ of dimension 3.  Since ${\rm Tan}(Y)$ is irreducible of dimension 3 and $D\subset {\rm Tan}(Y)$, $D$ is  equal to ${\rm Tan}(Y)$.  This means that any tangent line of $Y$ is bitangent to $Y$.  
 Take a general point $p\in Y$ so that $C_p=\hat T_pY\cap Y$ is an irreducible  curve in $\hat T_pY\cong \mathbb P^2$. By composing the normalization $\tilde C_p$ and the projection map $\hat T_pY\dashrightarrow \mathbb P^1$ from a point  $p$ we obtain  a finite morphism $\nu:\tilde C_p\rightarrow \mathbb P^1$. Since any tangent line is bitangent by our assumption,  $\nu$ is branched over all points of $\mathbb P^1$ which is impossible. We are done. 
\end{proof}

We remark that  for all $Y$ in the complement of codimension $\geq 2$ subscheme of $W$, the conditions in Proposition~\ref{p.congruence} are satisfied.

\begin{notation}\label{n.coord WP}
	Take a point  $P=(\ell, p_1+p_2, q_1+\cdots+q_{d-4})\in \mathcal F$.  Choose  
	a homogeneous coordinate system $t_0,\ldots,t_3$ on $\mathbb P^3$ so that $\ell$ is defined by $t_2=t_3=0$. 
	There are polynomials $g\in \mathbb C[t_0,t_1]_2$  and $h\in\mathbb C[t_0,t_1]_{d-4}$ such that \begin{center}$\ell\cap(g=0)=p_1+p_2$ and $\ell\cap (h=0)=q_1+\cdots+q_{d-4}$.\end{center} We often write $P=(\ell, [g],[h])$. 
	
	 The fiber  $\mathcal S_P:=\rho^{-1}(P)$ is  the space of  $Y\in W$  such that $P\in S(Y)$. 
	 For $Y\in \mathcal S_P$,  we have  a polynomial $f\in \mathbb C[t_0,\ldots,t_3]_d$    defining  $Y$ such that  
	 \begin{equation}\label{eq.Y}
	 	f=\begin{cases}g^2h+t_2 \bar{g}_{d-1}+t_3 \bar{h}_{d-1}&\mbox{if $\ell\not\subset Y$}\\
	 	t_2 \bar{g}_{d-1}+t_3 \bar{h}_{d-1}&\mbox{if $\ell\subset Y$}\end{cases}
	 \end{equation}
	 for some $\bar{g}_{d-1}=\sum x_{i_0,i_1,i_2}t_0^{i_0}t_1^{i_1}t_1^{i_2}\in\mathbb C[t_0,t_1,t_2]_{d-1}$ and $ \bar h_{d-1}=\sum y_{i_0,i_1,i_2,i_3}t_0^{i_0}t_1^{i_1}t_2^{i_2}t_3^{i_3}\in \mathbb C[t_0,t_1,t_2,t_3]_{d-1}$.  Thus  $\mathcal S_P$ is isomorphic to $\mathbb P^{\dim W-d}$ with homogeneous coordinate ring containing $x_{i_0,i_1,i_2}$ and $y_{i_0,i_1,i_2,i_3}$ as a part of its variables. 
\end{notation}

\begin{proposition}\label{p.general smooth}
	For a general hypersurface  $Y\subset \mathbb P^3$ of degree $d\geq 5$, $S(Y)$ is a smooth irreducible projective surface, and  $\phi_Y:S(Y)\rightarrow B(Y)$ is a finite birational  morphism. Therefore $\phi_Y$ is the normalization of $B(Y)$. 
\end{proposition}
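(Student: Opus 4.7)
The plan is to study the universal total space $\mathcal{S}$ first and then transfer smoothness to a general fiber of $\psi$ via generic smoothness in characteristic zero. From Notation~\ref{n.coord WP}, for each $P=(\ell,[g],[h])\in\mathcal{F}$ the fiber $\rho^{-1}(P)\subset W$ is the space of $Y$ whose defining polynomial $f$ restricts on $\ell$ to a scalar multiple of $g^2h$; since the restriction map $\mathbb{C}[t_0,\ldots,t_3]_d\to\mathbb{C}[t_0,t_1]_d$ is surjective, this imposes exactly $d$ independent linear conditions on $W$, cutting out a projective linear subspace of dimension $\dim W-d$. Hence $\rho:\mathcal{S}\to\mathcal{F}$ is a $\mathbb{P}^{\dim W-d}$-bundle; since $\mathcal{F}$ is a fiber product of two projective bundles over the smooth irreducible Grassmannian $G$, it is itself smooth and irreducible, and therefore so is $\mathcal{S}$, of dimension $(d+2)+(\dim W-d)=\dim W+2$.

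The projective morphism $\psi:\mathcal{S}\to W$ is dominant, since a general degree-$d$ hypersurface carries bitangent lines, so by generic smoothness in characteristic zero there is a dense open $U\subset W$ on which $\psi$ is smooth; in particular $S(Y)=\psi^{-1}(Y)$ is smooth of pure dimension $\dim\mathcal{S}-\dim W=2$ for $Y\in U$. Shrinking $U$ if necessary, I would arrange that $Y$ is moreover smooth, contains no line, and satisfies the hypotheses of Proposition~\ref{p.congruence}, so $B(Y)$ is a $2$-dimensional congruence and $\phi_Y:S(Y)\to B(Y)$ is a finite morphism. Over a dense open subset of $B(Y)$, a bitangent line $\ell$ meets $Y$ in a divisor $2p_1+2p_2+q_1+\cdots+q_{d-4}$ whose support consists of $d-2$ distinct points with $p_1,p_2$ being exactly the two multiplicity-$2$ points; the decomposition $(p_1+p_2,q_1+\cdots+q_{d-4})$ is then intrinsically determined by $\ell$, so $\phi_Y$ has singleton general fibers.

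Finally, invoking the classical irreducibility of $B(Y)$ for general $Y$ (cf.\ \cite{T80,ABT01}), every $2$-dimensional irreducible component of $S(Y)$ must surject onto $B(Y)$, and the singleton-fiber property then forces there to be exactly one such component. Consequently $S(Y)$ is irreducible and $\phi_Y$ is finite and birational; since $S(Y)$ is smooth and hence normal, $\phi_Y:S(Y)\to B(Y)$ is the normalization. The main obstacle in this plan is the irreducibility step: without the classical fact, one would need a monodromy or Bertini-type argument on the universal bitangent family $\mathcal{B}\subset G\times W$. The remaining ingredients follow formally from the projective-bundle structure of $\rho$, generic smoothness in characteristic zero, and the generic transversality of the bitangent intersection pattern.
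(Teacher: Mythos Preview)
Your argument is essentially the paper's: establish that $\mathcal S$ is smooth and irreducible via its projective-bundle structure (the paper fibers $\mathcal S$ over $G$ with fiber $\mathbb P^2\times\mathbb P^{d-4}\times\mathbb P^{\dim W-d}$; you use $\rho:\mathcal S\to\mathcal F$, which is equivalent since $\mathcal F\to G$ is itself a product of projective bundles), invoke generic smoothness in characteristic zero, and check that $\phi_Y$ is finite (no lines in general $Y$) and generically one-to-one (a general bitangent meets $Y$ in $d-2$ distinct points). The only divergence is how you obtain irreducibility of $S(Y)$: the paper reads it off directly from the irreducibility of $\mathcal S$ together with the dominance of $\psi$, with no appeal to $B(Y)$ whatsoever, so your detour through the ``classical irreducibility of $B(Y)$'' --- and the obstacle you flag --- is unnecessary. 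Note also that \cite{T80} treats only quartics, so that citation does not cover $d\ge5$; the paper's route avoids this dependency entirely, and in fact the irreducibility of $B(Y)$ then becomes a \emph{consequence} (as the image of the irreducible $S(Y)$) rather than an input.
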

\begin{proof}
Clearly each fiber of $\phi:\mathcal S\rightarrow G$ is isomorphic to $\mathbb P^2\times \mathbb P^{d-4}\times\mathbb P^{\dim W-d}$ which implies  that  $\mathcal S$ is connected and  smooth projective variety of dimension $\dim W+2$. Take a general $Y\in W$. 
	Since $\psi$ is dominant, 
	$S(Y)$ is an irreducible smooth projective surface. The condition $d\geq 5$ implies that general $Y$  contains no  line  and hence $\phi_Y$ is a finite morphism. Furthermore a general bitangent line $\ell$ of  $Y$ satisfies $Y\cap \ell=2p_1+2p_2+q_1+\cdots+q_{d-4}$ for distinct $p_i, q_j\in \ell$.  It follows that $\phi_Y$  is generically one to one. 	
\end{proof}

\medskip
Define $pr_i$ and $\pi_i$ by the projections making the following commutative diagram:

\begin{tikzpicture}
	\matrix (m) [matrix of math nodes,row sep=3em,column sep=4em,minimum width=2em]
	{
		& \mathcal F=\mathbb P(\Sym^2\mathcal I^{\vee})\times_G \mathbb P(\Sym^{d-4}\mathcal I^{\vee}) & \\
		\mathbb P(\Sym^2\mathcal I^{\vee})& &	\mathbb P(\Sym^{d-4}\mathcal I^{\vee})\\
		& G &  \\};
	\path[-stealth]
	(m-1-2) edge node [left] {$pr_1$} (m-2-1)
	edge  node [below] {\ \ \ $\pi$} (m-3-2)
	(m-1-2) edge node [right] {$pr_2$} (m-2-3)
	(m-2-1) edge node [left] {$\pi_1$} (m-3-2)
	(m-2-3) edge node [right] {$\pi_2$} (m-3-2);
\end{tikzpicture}

\noindent
Set $L:=pr_1^*\mathcal O(1)$ and $M:=pr_2^*\mathcal O(1)$. Then $L^{-1}$  and $M^{-1}$ are subbundles of $\pi^*\Sym^2\mathcal I^{\vee}$ and $\pi^*\Sym^{d-4}\mathcal I^{\vee}$ respectively. So we get a natural inclusion:
$$ L^{-2}\otimes M^{-1}\hookrightarrow \pi^*\Sym^d\mathcal I^{\vee}$$
 Define a vector bundle $R$ on $\mathcal F$   as the cokernel of the inclusion above. 
 
For each $Y\in W$ containing no line, its defining equation gives a section  $\mathcal O_{\mathcal F}\rightarrow \pi^*\Sym^d\mathcal I^{\vee}$. Composing with the quotient map we obtain a  map $\mathcal O_{\mathcal F}\rightarrow R$ which vanishes on $S(Y)$. Thus $S(Y)$ is defined by zeroes of a section of $R$ (cf. \cite{Ful}). Since $R$ has rank $d$, this implies that if $S(Y)$ is a projective surface then it has at worst locally complete intersection singularities.

\begin{corollary}
Let  $Y\subset \mathbb P^3$ be as in Proposition \ref{p.congruence}. If $Y$ contains no line, then  $[S(Y)]=c_d(R)$ in the Chow ring $ch(\mathcal F)$. 
\end{corollary}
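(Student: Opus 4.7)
The plan is to apply the standard fact from intersection theory that the fundamental class of the zero scheme of a regular section of a vector bundle equals the top Chern class of that bundle. The setup in the paragraph just before the corollary already identifies $S(Y) \subset \mathcal{F}$ as the zero scheme of the section $\mathcal{O}_{\mathcal{F}} \to R$ induced by the defining equation of $Y$, where $R$ has rank $d$. Hence the content of the proof reduces to verifying that this section is \emph{regular}, i.e., that $S(Y)$ has the expected codimension $d$ in $\mathcal{F}$.

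First, I would compute $\dim \mathcal{F} = \dim G + 2 + (d-4) = d+2$, since $\mathcal{F}$ is the fiber product over $G = {\rm Gr}(2,4)$ of two projective bundles with fiber dimensions $2$ and $d-4$ respectively. Thus having codimension $d$ in $\mathcal{F}$ is the same as having dimension $2$, so what needs to be checked is $\dim S(Y) = 2$.

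Second, I would combine Proposition~\ref{p.congruence} (which gives $\dim B(Y) = 2$ under the stated hypotheses on $Y$) with the finiteness of $\phi_Y : S(Y) \to B(Y)$, which holds precisely because $Y$ contains no line (as remarked just after Definition~\ref{d.Fano congruence}). This yields $\dim S(Y) = \dim B(Y) = 2$, so the zero scheme of the section has codimension exactly $d = \mathrm{rk}\, R$ in the smooth ambient variety $\mathcal{F}$.

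Finally, I would invoke the standard fact (e.g.\ \cite{Ful}, which the excerpt already cites in this context): on a smooth variety, a section of a rank $r$ vector bundle whose zero scheme has pure codimension $r$ is regular, the zero scheme is a local complete intersection, and its fundamental class equals $c_r$ of the bundle. Applied here, this gives $[S(Y)] = c_d(R)$ in $ch(\mathcal{F})$. The only real obstacle is the codimension verification; once dimension $2$ of $S(Y)$ is established, the conclusion is immediate, and no expansion of $c_d(R)$ via the short exact sequence $0 \to L^{-2}\otimes M^{-1} \to \pi^*\Sym^d \mathcal{I}^{\vee} \to R \to 0$ is needed at this stage.
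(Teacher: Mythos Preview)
Your proposal is correct and matches the paper's (implicit) argument: the paragraph immediately preceding the corollary already identifies $S(Y)$ as the zero locus of a section of the rank-$d$ bundle $R$, and the corollary is stated without further proof because, under the hypotheses of Proposition~\ref{p.congruence} together with the ``no line'' assumption (making $\phi_Y$ finite), $S(Y)$ is a surface in the $(d+2)$-fold $\mathcal F$, i.e.\ has the expected codimension~$d$. Your write-up simply makes this dimension count and the appeal to \cite{Ful} explicit, which is exactly what the paper leaves to the reader.
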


\section{Criterion for smoothness of Fano congruence of bitangents}
In this section we present a criterion for smoothness of Fano congruence of bitangents. 
 We introduce the following notation for convenience. 
\begin{notation}\label{n.eq}	
	In Notation \ref{n.coord WP},  
  choose $r_1,r_2\in\mathbb C[t_0,t_1]_2$ and $s_1,\ldots,s_{d-4}\in\mathbb C[t_0,t_1]_{d-4}$ such that  \begin{center}	$<g,r_1,r_2>=\mathbb C[t_0,t_1]_2$ and
	$<h,s_1,\ldots,s_{d-4}>=\mathbb C[t_0,t_1]_{d-4}.$\end{center}
Set $$A_P:=<g^2h, gr_1h, gr_2h, g^2s_1,\ldots,g^2s_{d-4}>=g<gh, r_1h, r_2h, gs_1,\ldots,gs_{d-4}>.$$
\end{notation}

\bigskip 
Next is a technical lemma which will be used to show the smoothness of Fano congruence of bitangents. 
\begin{lemma}\label{l.key lemma}  In the situation of Notations \ref{n.coord WP} and \ref{n.eq},  take $Y\in\mathcal S_P$ with  $\ell\not\subset Y$ and set $g_{d-1}:=\bar g_{d-1}(t_0,t_1,0)$ and $h_{d-1}:=\bar h_{d-1}(t_0,t_1,0,0)\in\mathbb C[t_0,t_1]_{d-1}$ where $\bar g_{d-1}$ and $\bar h_{d-1}$ are the polynomials associated to $Y$ as in Notation \ref{n.coord WP}.  
	Let  $$B_Y:= <g^2h, t_0g_{d-1}, t_1g_{d-1},t_0h_{d-1},t_1h_{d-1}>\subset \mathbb C[t_0,t_1]_d.$$ 
	Then if $\dim A_P=d-1$, then $\phi_Y$ is immersed at $P$, and   if $<A_P,B_Y>=\mathbb C [t_0,t_1]_d$ then  $S(Y)$ is a surface smooth at $P$.
\end{lemma}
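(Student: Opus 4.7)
The plan is to identify $T_PS(Y)$ with the kernel of an explicit linear map $\Phi\colon T_P\mathcal{F}\to R|_P$, obtained by differentiating the defining section $\sigma_Y$ of $R$ at $P$, and then read off both claims by computing the image of $\Phi$ in terms of $A_P$ and $B_Y$. In the coordinates of Notation~\ref{n.coord WP}, the tangent space splits as
\[
T_P\mathcal{F}=T_\ell G\oplus (\mathbb C[t_0,t_1]_2/\mathbb C g)\oplus(\mathbb C[t_0,t_1]_{d-4}/\mathbb C h),
\]
where $T_\ell G\cong \mathbb C[t_0,t_1]_1^{\oplus 2}$ is parametrized by pairs $(\alpha_2,\alpha_3)$ encoding the deformed line $\ell_\epsilon\colon [t_0:t_1]\mapsto[t_0:t_1:\epsilon\alpha_2:\epsilon\alpha_3]$. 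Since $\ell\not\subset Y$, the fiber $(L^{-2}\otimes M^{-1})|_P$ is the line $\mathbb C g^2h\subset \mathbb C[t_0,t_1]_d$, so $R|_P\cong \mathbb C[t_0,t_1]_d/\mathbb C g^2h$, and $\sigma_Y$ is the class of $f|_\ell=g^2h$, which vanishes.

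Next I would carry out the first-order computation. Using (\ref{eq.Y}), restricting $f$ to $\ell_\epsilon$ yields $f|_{\ell_\epsilon}=g^2h+\epsilon(\alpha_2 g_{d-1}+\alpha_3 h_{d-1})+O(\epsilon^2)$, while deforming $[g]$ by $\dot g$ and $[h]$ by $\dot h$ shifts the trivializing generator of $L^{-2}\otimes M^{-1}$ from $g^2h$ to $g^2h+\epsilon(2gh\dot g+g^2\dot h)+O(\epsilon^2)$. Comparing modulo $\mathbb C g^2h$ gives
\[
\Phi(\alpha_2,\alpha_3,\dot g,\dot h)=\bigl(\alpha_2 g_{d-1}+\alpha_3 h_{d-1}-2gh\dot g-g^2\dot h\bigr)\bmod \mathbb C g^2h,
\]
which is well defined modulo the indeterminacies $\dot g\in\mathbb C g$ and $\dot h\in\mathbb C h$ since these corrections land in $\mathbb C g^2h$. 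Hence $T_PS(Y)=\ker\Phi$.

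Now I would extract the two conclusions. The differential $d\phi_Y$ is the composition $T_PS(Y)\hookrightarrow T_P\mathcal{F}\twoheadrightarrow T_\ell G$, so $\ker d\phi_Y$ is $\ker\Phi$ intersected with the fiber summand; writing $\dot g,\dot h$ in the bases $\{r_1,r_2\}$ and $\{s_1,\ldots,s_{d-4}\}$, the image of the fiber part of $\Phi$ is the span of $ghr_1,ghr_2,g^2s_1,\ldots,g^2s_{d-4}$ modulo $\mathbb C g^2h$, i.e.\ $A_P/\mathbb C g^2h$. The fiber source has dimension $d-2$, so injectivity holds exactly when $\dim A_P=d-1$, proving that $\phi_Y$ is immersed at $P$. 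For the second claim, because $R$ has rank $d$ and $\dim\mathcal{F}=d+2$, the section $\sigma_Y$ cuts out a local complete intersection of dimension at least $2$ at $P$, so $S(Y)$ is a smooth surface at $P$ iff $\dim\ker\Phi=2$, iff $\Phi$ is surjective. The full image of $\Phi$ is the fiber contribution together with $\langle t_0 g_{d-1},t_1 g_{d-1},t_0 h_{d-1},t_1 h_{d-1}\rangle$ from $T_\ell G$, hence equals $\langle A_P,B_Y\rangle/\mathbb C g^2h$; surjectivity is therefore equivalent to $\langle A_P,B_Y\rangle=\mathbb C[t_0,t_1]_d$.

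The main technical step is making the differential $d\sigma_Y$ rigorous, because the subbundle $L^{-2}\otimes M^{-1}$ itself moves with $P$ and the values of $\sigma_Y$ at nearby points lie in different one-dimensional quotients. The cleanest way is to choose a local trivialization of $\pi^*\Sym^d\mathcal{I}^\vee$ near $P$ and use $(g+\epsilon\dot g)^2(h+\epsilon\dot h)$ as the varying generator of $L^{-2}\otimes M^{-1}$; the scalar ambiguity in the trivialization is absorbed precisely by passing to the quotient $\mathbb C[t_0,t_1]_d/\mathbb C g^2h$. Once the formula for $\Phi$ is justified, the rest of the argument is linear algebra in $\mathbb C[t_0,t_1]_d$ with the subspaces $A_P$ and $B_Y$.
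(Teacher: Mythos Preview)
Your argument is correct and takes a genuinely different route from the paper's. The paper introduces the morphism $\varphi\colon\mathcal F\to\mathbb P(\Sym^d\mathcal I^\vee)$, $P'\mapsto(\ell',2(p_1'+p_2')+\sum q_j')$, together with the local section $G_Y\colon\ell'\mapsto(\ell',Y\cap\ell')$, identifies $\varphi(S(Y))=\varphi(\mathcal F)\cap G_Y$, and then bounds $\dim T_PS(Y)$ by comparing $d\varphi(T_P\mathcal F)$ with $T_{\varphi(P)}G_Y$ inside $T_{\varphi(P)}\mathbb P(\Sym^d\mathcal I^\vee)$; this yields $\dim T_PS(Y)\le d+3-\dim\langle A_P,B_Y\rangle$ and, separately, that $\varphi$ (hence $\phi_Y$) is immersed when $\dim A_P=d-1$. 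You instead differentiate the defining section $\sigma_Y$ of the rank-$d$ bundle $R$ directly, obtaining $T_PS(Y)=\ker\Phi$ with $\Phi$ landing in $R|_P\cong\mathbb C[t_0,t_1]_d/\mathbb C g^2h$, and read off both conclusions from $\operatorname{im}\Phi=\langle A_P,B_Y\rangle/\mathbb C g^2h$ and $\operatorname{im}(\Phi|_{\text{fiber}})=A_P/\mathbb C g^2h$. Your approach is more direct and makes essential use of the bundle $R$ already set up in Section~\ref{s.Fano congruence of bitangents}, while the paper's route avoids the bookkeeping you flag as the ``main technical step'' (trivializing $R$ while the line $L^{-2}\otimes M^{-1}$ moves) by working in the fixed ambient $\mathbb P(\Sym^d\mathcal I^\vee)$. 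The underlying first-order computations coincide, so the two arguments are equivalent at the level of linear algebra in $\mathbb C[t_0,t_1]_d$.
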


\begin{proof}
	Define a morphism  $$\varphi:\mathcal F\rightarrow \mathbb P(\Sym^d\mathcal I^{\vee})$$ by $$P'=(\ell',p_1'+p_2',q_1'+\cdots+q_{d-4}')\mapsto (\ell', 2(p_1'+p_2')+q_1'+\cdots+q_{d-4}').$$

	 Let $U\cong \mathbb C^4_{a_0,a_1,b_0,b_1}\subset G$ be the affine neighborhood of $\ell$ parametrizing lines defined by $t_2=a_0t_0+a_1t_1$, and 
	$t_3=b_0t_0+b_1t_1$. 
 The deformation of $P=(\ell,[g],[h])$ in $\mathcal F$ is given by 
	\begin{center}
		 $t_2=a_0t_0+a_1t_1$, 
		$t_3=b_0t_0+b_1t_1$, $g+u_1r_1+u_2r_2$ and $h+v_1s_1+\cdots+v_{d-4}s_{d-4}$
	\end{center}where $a_i, b_i, u_i, v_i\in\mathbb C$ and hence there is a natural identification  $$T_{P}\mathcal F=\mathbb C^4_{a_0,a_1,b_0,b_1}\times \mathbb C^2_{u_1,u_2}\times \mathbb C^{d-2}_{v_1,\ldots,v_{d-4}}.$$ 
We have a trivialization of the projective bundle $\mathbb P(\Sym^{d}\mathcal I^{\vee})|_U\cong U\times \mathbb P(\mathbb C[t_0,t_1])_d)$ and hence   $$T_{\phi(P)}\mathbb P(\Sym^d\mathcal I^{\vee})=\mathbb C^{4}_{a_0,a_1,b_0,b_1}\times T_{[g^2h]}\mathbb P(\mathbb C[t_0,t_1]_d).$$
The terms of the order $\le 1$ of the equation    
	\begin{align*}&(g+u_1r_1+u_2r_2)^2(h+v_1s_1+\cdots+v_{d-4}s_{d-4})\\&=g^2h+2u_1gr_1h+2u_2gr_2h+v_1g^2s_1+\cdots+v_{d-4}g^2s_{d-4}+\{\mbox{order $\geq 2$ in $u_i, v_j$}\},\end{align*}
	 is the first order deformation of the equation $g^2h$. Therefore  $d\phi(T_P\mathcal F)$ is 
	\begin{equation}\label{eq.ImagF}\{((a_0,a_1,b_0,b_1),[g^2h+2u_1gr_1h+2u_2gr_2h+v_1g^2s_1+\cdots+v_{d-4}g^2s_{d-4}])\ |\ a_i,b_j\in\mathbb C\}\end{equation} which is equal to the subspace   $\mathbb C^4_{a_0,a_1,b_0,b_1}\times T_{[g^2h]}\mathbb P(A_P)$ of $\mathbb C^{4}_{a_0,a_1,b_0,b_1}\times T_{[g^2h]}\mathbb P(\mathbb C[t_0,t_1]_d)$. 	
Therefore 	
\begin{equation}\label{eq.F} \dim d\varphi( T_P\mathcal F)=4+(\dim A_P-1).\end{equation}

Since $\ell\not\subset Y$, we have a local section  $G_Y$  of the projective  bundle $\mathbb P(\Sym^d\mathcal I^{\vee})\rightarrow G$ defined by 
$\ell'\mapsto (\ell', Y\cap \ell')$  in some neighborhood $U'\subset U$ of $\ell$.   Under the isomorphsim $\mathbb P(Sym^{d}\mathcal I^{\vee})|_U\cong U\times \mathbb P(\mathbb C[t_0,t_1])_d)$,  the restriction $G_Y|_{U'}$ is given by  $$(a_0,a_1,b_0,b_1)\mapsto ((a_0,a_1,b_0,b_1), [f(t_0,t_1, a_0t_0+a_1t_1,b_0t_0+b_1t_1)])\in U\times \mathbb P(\mathbb C[t_0,t_1]_d).$$ 
From \begin{align*}&f(t_0,t_1, a_0t_0+a_1t_1,b_0t_0+b_1t_1)\\&=g^2h+a_0t_0g_{d-1}+a_1t_1g_{d-1}+b_0t_0h_{d-1}+b_1t_1h_{d-1}+\mbox{\{order $\geq 2$ in $a_i,b_j$\}},\end{align*}
 it follows that the  tangent space $T_{\varphi(P)}G_Y$  is equal to  
\begin{equation}\label{eq.G}\{((a_0,a_1,b_0,b_1),[g^2h+a_0t_0g_{d-1}+a_1t_1g_{d-1}+b_0t_0h_{d-1}+b_1t_1h_{d-1}])\ |\ a_i, b_j\in \mathbb C\}\end{equation} which is contained in the subspace $\mathbb C^4_{a_0,a_1,b_0,b_1}\times T_{[g^2h]}\mathbb P( B_Y)$ of $\mathbb C^4_{a_0,a_1,b_0,b_1}\times T_{[g^2h]}\mathbb P(\mathbb C[t_0,t_1]_d)$ 
From this and (\ref{eq.F}), we have 	

\begin{align}\label{eq.FG}\dim d\varphi(T_P\mathcal F)\cap T_{\varphi(P)}G_Y&=4-(\dim B_Y-\dim A_P\cap B_Y)=4-(\dim <A_P,B_Y>-\dim A_P).\end{align}
Since	$\varphi(S(Y))=\varphi(\mathcal F)\cap G_Y$, we have $$\dim d\varphi( T_PS(Y))\leq \dim d\varphi (T_P\mathcal F)\cap T_{\varphi(P)}G_Y=4-(\dim <A_P,B_Y>-\dim A_P).$$
From this and (\ref{eq.ImagF}), we obtain  \begin{align*}2\leq \dim T_PS(Y)&\leq \dim T_P\mathcal F-\dim d\varphi( T_P\mathcal F)+ \dim d\varphi(T_PS(Y))\\&
\leq (d+2)-(3+\dim A_P)+(4-\dim <A_P,B_Y>-\dim A_P)\\&=	 d+3-\dim <A_P,B_Y>.\end{align*}  
Therefore if $\dim <A_P,B_Y>=d+1$ then $\dim T_PS(Y)=2$, and hence we get the first statement.  (\ref{eq.F}) says that if $\dim A_P=d-1$ then $\varphi$ is immersed at $P$ and thus so is $\phi_Y$ because $\phi_Y$ is  locally  a restriction of $\varphi$ at $P$.  We are done. 
\end{proof}

Next theorem  describes the smooth locus of Fano congruence of bitangents to arbitrary $Y$. 

\begin{theorem}\label{t.smooth}Let $Y$ is a hypersurface in $\mathbb P^3$ of degree $d\geq 5$. Let $P=(\ell, p_1+p_2, q_1+\cdots+q_{d-4})$ be a point of $ S(Y)$ such that  $\ell\not\subset Y$, $\{p_1,p_2\}\cap \{q_1,\ldots,q_{d-4}\}=\emptyset$. Then $\phi_Y:S(Y)\rightarrow B(Y)$ is immersed at $P$. If $Y$ is smooth at $p_1$ and $p_2$, then  $S(Y)$ is a smooth surface at $P$. 
\end{theorem}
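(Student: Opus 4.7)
The plan is to apply Lemma \ref{l.key lemma}, reducing the theorem to two linear-algebra statements: $\dim A_P = d-1$ (for immersion) and, assuming $Y$ is smooth at $p_1,p_2$, $\langle A_P, B_Y\rangle = \mathbb C[t_0,t_1]_d$ (for smoothness of $S(Y)$).

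For the first, I would use that $\{p_1,p_2\}\cap\{q_1,\ldots,q_{d-4}\}=\emptyset$ is equivalent to $\gcd(g,h)=1$. Writing $A_P = g\cdot V$ with $V = h\cdot\mathbb C[t_0,t_1]_2 + g\cdot\mathbb C[t_0,t_1]_{d-4}\subset \mathbb C[t_0,t_1]_{d-2}$, the two summands have dimensions $3$ and $d-3$, and their intersection consists of degree-$(d-2)$ polynomials divisible by both $g$ and $h$; since $\deg(gh)=d-2$ and $\gcd(g,h)=1$, this intersection is $\mathbb C\cdot gh$. Hence $\dim V = 3+(d-3)-1 = d-1 = \dim\mathbb C[t_0,t_1]_{d-2}$, so $V = \mathbb C[t_0,t_1]_{d-2}$ and $A_P = g\cdot\mathbb C[t_0,t_1]_{d-2}$ has dimension $d-1$.

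For the second, $Q := \mathbb C[t_0,t_1]_d/A_P$ identifies canonically with $H^0(\mathcal O_Z(d))\cong\mathbb C^2$, where $Z\subset \ell$ is the length-$2$ subscheme cut out by $g$. Differentiating $f = g^2h + t_2\bar g_{d-1} + t_3\bar h_{d-1}$ at $p_i$, where $g(p_i)=0$, gives the gradient $(0,0,g_{d-1}(p_i),h_{d-1}(p_i))$, so $Y$ is smooth at $p_i$ iff $(g_{d-1}(p_i),h_{d-1}(p_i))\ne(0,0)$. Under this smoothness at each $p_i$, I would produce $w\in U := \langle g_{d-1},h_{d-1}\rangle$ whose restriction $w|_Z$ is a unit in $\mathcal O_Z$: in the reduced case $p_1\ne p_2$, the two conditions $w(p_1),w(p_2)\ne 0$ each cut out a proper hyperplane of $U$ and so are jointly satisfied on a dense open subset of $U$; in the nonreduced case $p_1=p_2$, the single condition $w(p_1)\ne 0$ suffices. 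Multiplication by such a unit $w|_Z$ is an isomorphism $H^0(\mathcal O_Z(1))\xrightarrow{\sim} H^0(\mathcal O_Z(d))=Q$, and precomposition with the surjection $\mathbb C[t_0,t_1]_1\twoheadrightarrow H^0(\mathcal O_Z(1))$ shows that $\mathbb C[t_0,t_1]_1\cdot w$ already surjects onto $Q$. Since $w$ is a linear combination of $g_{d-1},h_{d-1}$, this image lies in the span of $\{t_0 g_{d-1}, t_1 g_{d-1}, t_0 h_{d-1}, t_1 h_{d-1}\}$, which therefore also spans $Q$, giving $\langle A_P, B_Y\rangle = \mathbb C[t_0,t_1]_d$ as required.

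The main obstacle I anticipate is handling the nonreduced case $p_1 = p_2$ uniformly with the reduced one. In that case $Q$ remains two-dimensional but now encodes a $1$-jet at a single point, and ``nonvanishing at both $p_i$'' must be reinterpreted as being a unit in $\mathcal O_Z \cong \mathbb C[\epsilon]/(\epsilon^2)$, which collapses to the single condition $w(p_1)\ne 0$. The role of having both multipliers $t_0$ and $t_1$ in $B_Y$ is precisely to supply the two independent directions in $Q$ (value and derivative) once $w$ becomes a unit; the line-bundle formulation on $Z$ makes this invisible in the notation and lets the same argument cover both cases without explicit coefficient matching.
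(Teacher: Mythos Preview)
Your proof is correct and follows the same overall strategy as the paper: both reduce to Lemma~\ref{l.key lemma}, both compute $A_P=g\cdot\mathbb C[t_0,t_1]_{d-2}$ from $\gcd(g,h)=1$, and both then verify the spanning condition $\langle A_P,B_Y\rangle=\mathbb C[t_0,t_1]_d$ under the smoothness hypothesis. The difference lies in how that last step is carried out. The paper splits into two cases, choosing coordinates so that $g=t_0t_1$ (when $p_1\ne p_2$) or $g=t_0^2$ (when $p_1=p_2$), writes down the images of $t_ig_{d-1}$ and $t_ih_{d-1}$ modulo $A_P$ explicitly, and reads off that a certain $4\times 2$ coefficient matrix has rank~$2$ exactly when $Y$ is smooth at each $p_i$. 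You instead interpret the quotient $Q=\mathbb C[t_0,t_1]_d/A_P$ as $H^0(\mathcal O_Z(d))$ for the length-$2$ scheme $Z=V(g)\subset\ell$, translate smoothness at $p_i$ into the nonvanishing of $(g_{d-1},h_{d-1})$ at $p_i$, and then produce a single $w\in\langle g_{d-1},h_{d-1}\rangle$ restricting to a unit of $\mathcal O_Z$, so that multiplication by $w|_Z$ carries $\mathbb C[t_0,t_1]_1\cong H^0(\mathcal O_Z(1))$ isomorphically onto $Q$. Your argument is coordinate-free and handles the reduced and nonreduced cases uniformly; the paper's explicit matrix computation, while less elegant here, has the advantage that it extends directly to the more degenerate configurations treated later in Proposition~\ref{p.isolated singularity} and Theorem~\ref{t.singular locus}, where $\{p_1,p_2\}\cap\{q_1,\ldots,q_{d-4}\}\ne\emptyset$ and the quotient by $A_P$ is no longer simply $H^0(\mathcal O_Z)$.
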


\begin{proof}
Take a point  $P=(\ell, p_1+p_2, q_1+\cdots+q_{d-4})\in\mathcal F$ such that  $\{p_1,p_2\}\cap \{q_1,\ldots,q_{d-4}\}=\emptyset$. Choose homogeneous coordinates $t_0,\ldots,t_3$ in $\mathbb P^3$ and $g, h, r_i, s_j\in\mathbb C[t_0,t_1]$ so that 
$\ell=(t_2=t_3=0)$ and $P=(\ell,[g],[h])$ as in Notations \ref{n.coord WP} and \ref{n.eq}. By the assumption $\{p_1,p_2\}\cap \{q_1,\ldots,q_{d-4}\}=\emptyset$  we have 
	$$A_P=g\mathbb C[t_0,t_1]_{d-2}.$$ 
Therefore by Lemma \ref{l.key lemma},  the morphism  $\phi_Y$ is immersed at $P$ for any $Y$ with $\ell\not\subset Y$.

Let $Y\in S_P$ be a hypersurface of degree $d\geq 5$ such that $\ell\not\subset Y$ and $Y$ is  smooth at $p_1$ and $p_2$.  	
 Let $x_{i_0,i_1,i_2}$, $y_{i_0,i_1,i_2,i_3}$, $g_{d-1}$ and  $h_{d-1}$  be  as in Notation \ref{n.coord WP} and Lemma \ref{l.key lemma}. Then 
 $$g_{d-1}=x_{d-1,0,0}t_0^{d-1}+x_{d-2,1,0,0}t_0^{d-2}t_1+\cdots+x_{0,d-1,0}t_1^{d-1}$$ and $$h_{d-1}=y_{d-1,0,0,0}t_0^{d-1}+y_{d-2,1,0,0}t_0^{d-2}t_1+\cdots+y_{0,d-1,0,0}t_1^{d-1}.$$
 
\medskip
{\bf Case 1:} Assume $p_1\neq p_2$. We may set    
	$g=t_0t_1$ so that   $p_1=(1:0:0:0)$, $p_2=(0:1:0:0)$, and	$A_P=t_0t_1\mathbb C[t_0,t_1]_{d-2}.$
By modulo $A_P$, we have the following   equivalences  
\begin{align*}t_0g_{d-1}&\equiv x_{d-1,0,0}t_0^d,\\
	t_1g_{d-1}&\equiv x_{0,d-1,0}t_1^d,\\
	t_0h_{d-1}&\equiv y_{d-1,0,0,0}t_0^d, \\
	t_1h_{d-1}&\equiv y_{0,d-1,0,0}t_1^d.
\end{align*} Therefore  $<A_P,B_Y>=\mathbb C[t_0,t_1]_d$ if and only if the rank of the next matrix $M_Y$ is equal to 2;

 $$M_Y=\begin{pmatrix} x_{d-1,0,0}&0\\
	0&x_{0,d-1,0}\\
	y_{d-1,0,0,0}&0\\
	0&y_{0,d-1,0,0}
\end{pmatrix}.$$
Since $(1:0:0:0)$ and $(0:1:0:0)$ are smooth points of $Y$,   we have 
\begin{center}$x_{d-1,0,0}\neq 0$ or $y_{d-1,0,0}\neq0$\end{center}
and \begin{center}$x_{0,d-1,0}\neq 0$ or $y_{0,d-1,0,0}\neq0$.\end{center}	
Therefore $M_Y$ has rank 2 which implies that $S(Y)$ is smooth at $P$ by Lemma \ref{l.key lemma}. 
	
\medskip
{\bf Case 2:}	Suppose that   $p_1=p_2$. We may assume that $g=t_0^2$ so that $p_1=p_2=(0:1:0:0)$ and  $A_P=t_0^2\mathbb C[t_0,t_1]_{d-2}$. By modulo $A_P$,  we have the following  equivalences:
	\begin{align*}t_0g_{d-1}&\equiv x_{0,d-1,0}t_0t_1^{d-1},\\
		t_1g_{d-1}&\equiv x_{1,d-2,0}t_0t_1^{d-1}+x_{0,d-1,0}t_1^d,\\
		t_0h_{d-1}&\equiv y_{0,d-1,0,0}t_0t_1^{d-1}, \\
		t_1h_{d-1}&\equiv y_{1,d-2,0,0}t_0t_1^{d-1}+y_{0,d-1,0,0}t_1^d.
	\end{align*}
 Therefore $<A_P,B_Y>=\mathbb C[t_0,t_1]_d$ if and only if the rank of the matrix $$M_Y=\begin{pmatrix} x_{0,d-1,0}& 0\\
		x_{1,d-2,0}& x_{0,d-1,0}\\
		y_{0,d-1,0,0} &0\\
		y_{1,d-2,0,0}&y_{0,d-1,0,0}
	\end{pmatrix}$$is 2. 
	Since $Y$ is smooth at $(0:1:0:0)$,	we have $x_{0,d-1,0}\neq 0$ or $y_{0,d-1,0,0}\neq 0$ which implies that  $M_Y$ has rank 2. 	Therefore $S(Y)$ is a smooth surface locally at $P$ by  Lemma~\ref{l.key lemma}.
\end{proof}

\begin{remark}Assume that $Y$ is smooth and contains no line. 
Theorem~\ref{t.smooth} says that  $S(Y)\setminus \phi_Y^{-1}(S_{3,2,1^{d-5}}(Y))$ is either empty  or a smooth surface such that the restriction $\phi_Y|_{S(Y)\setminus \phi_Y^{-1}(S_{3,2,1^{d-5}}(Y))}$ is an immersion. We remark that $\phi_Y$ is one to one on the outside of $\phi^{-1}(S_{2^3,1^{d-6}}(Y))$.  Thus  $B(Y)\setminus \{S_{2^3,1^{d-6}}(Y)\cup S_{3,2,1^{d-5}}(Y)\}$ is either empty  or  a  smooth surface. This generalizes the result in \cite{T80} on quartic surface. In \cite{ABT01}, there are some results on singularities of $B(Y)$  for general $Y$.  
\end{remark}

\section{Fano congruence of bitangents in Lefschetz pencil of hypersurfaces}
In this section we are going to consider the singularities of the Fano congruences of bitangents  in a general Lefschetz pencil.   We remark that since $d\geq 5$,  the space of  $Y\in W$  containing some line has codimension  $\geq 2$ in $W$. So if we take a general line $L\subset W$ then any $Y\in L$ contains no line. 

\begin{proposition}\label{p.isolated singularity}
	Let  $L$ be a general line in $W$ and let $Y\in L$. The set of singular points $P=(\ell, p_1+p_2, q_1+\cdots+q_{d-4})$ of $S(Y)$ such that $Y$ is smooth at $p_1$ and $p_2$ is finite.  	
\end{proposition}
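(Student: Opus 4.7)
The strategy is to introduce the universal singular locus
\[
\Sigma=\{(Y,P)\in\mathcal{S}\mid \ell\not\subset Y,\ Y\text{ is smooth at }p_1,p_2,\ S(Y)\text{ is singular at }P\}
\]
inside the incidence scheme $\mathcal{S}$ and reduce the proposition to the single dimension bound $\dim\Sigma\leq\dim W-1$. Granted this bound, the proposition follows easily: a general line $L\subset W$ automatically avoids the codimension-$\geq 2$ locus of $Y$'s containing a line (by the remark preceding the proposition), so $\ell\not\subset Y$ for every $Y\in L$; moreover, each irreducible component $\Sigma_i$ of $\Sigma$ has $\psi(\Sigma_i)\subsetneq W$ (otherwise general $S(Y)$ would fail to be smooth, contradicting Proposition~\ref{p.general smooth}), so standard dimension considerations applied to $\psi|_{\Sigma_i}:\Sigma_i\to W$ show that for general $L$ the intersection $\Sigma\cap\psi^{-1}(L)$ has dimension $\leq\dim\Sigma+1-\dim W\leq 0$ and is therefore finite; in particular, $\Sigma\cap\psi^{-1}(b)$ is finite for every $b\in L$.

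To prove $\dim\Sigma\leq\dim W-1$, I will stratify via $\rho:\mathcal{S}\to\mathcal{F}$. By Theorem~\ref{t.smooth}, $\rho(\Sigma)$ is contained in the coincidence divisor $\mathcal{F}':=\{P\in\mathcal{F}\mid\{p_1,p_2\}\cap\{q_1,\ldots,q_{d-4}\}\neq\emptyset\}$. Among the codimension-one strata of $\mathcal{F}$, only $\mathcal{F}_{(c)}=\{P:\text{exactly one coincidence }p_i=q_j\text{ occurs}\}$ contributes to $\Sigma$, because the strata $\{p_1=p_2\}$ and $\{q_i=q_j\}$ still satisfy the hypothesis of Theorem~\ref{t.smooth} and yield empty fibers. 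For the main stratum $\mathcal{F}_{(c)}$, of dimension $d+1$, choose local coordinates with $g=t_0t_1$ and $h=t_0\tilde h$ as in Notations~\ref{n.coord WP}--\ref{n.eq}. A direct computation gives $A_P=t_0^2t_1\,\mathbb{C}[t_0,t_1]_{d-3}$, of dimension $d-2$, so the quotient $\mathbb{C}[t_0,t_1]_d/A_P$ has basis $\{t_0^d,t_0t_1^{d-1},t_1^d\}$ and the four vectors $t_0g_{d-1},t_1g_{d-1},t_0h_{d-1},t_1h_{d-1}$ assemble into a $4\times 3$ matrix $M_Y$ whose $3\times 3$ minors factor through two bilinear determinants $D_1,D_2$ in the Taylor coefficients of $Y$ at $p_1,p_2$. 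Under the smoothness of $Y$ at $p_1,p_2$, Lemma~\ref{l.key lemma} shows that $S(Y)$ is singular at $P$ precisely when $D_1=D_2=0$, a codimension-two condition in $\mathcal{S}_P\cong\mathbb{P}^{\dim W-d}$. Hence $\dim(\Sigma\cap\rho^{-1}(\mathcal{F}_{(c)}))\leq(d+1)+(\dim W-d)-2=\dim W-1$.

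The main obstacle is then the case analysis for the codimension-$\geq 2$ strata of $\mathcal{F}'$, corresponding to multiple coincidences among $\{p_1,p_2,q_1,\ldots,q_{d-4}\}$ (for instance $p_1=p_2=q_1$, $p_1=q_1=q_2$, or $p_1=q_1,\,p_2=q_2$). For each such stratum one computes $A_P$ and the associated matrix $M_Y$, then exhibits at least one rank-drop minor of $M_Y$ that is a nonzero polynomial on $\mathcal{S}_P$ when restricted to the locus where $Y$ is smooth at $p_1,p_2$, so the rank-drop locus has codimension $\geq 1$ in $\mathcal{S}_P$; combined with the stratum's codimension $\geq 2$ in $\mathcal{F}$, this yields $\dim(\Sigma\cap\rho^{-1}(\mathcal{F}_T))\leq\dim W-1$. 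Summing over all strata gives $\dim\Sigma\leq\dim W-1$, completing the proof.
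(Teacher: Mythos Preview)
Your proposal is correct and follows essentially the same strategy as the paper: introduce the universal singular locus, reduce to the bound $\dim\Sigma\leq\dim W-1$, stratify by coincidence type via $\rho$, and on each stratum compute $A_P$ and the matrix $M_Y$ coming from Lemma~\ref{l.key lemma} to bound the fiber dimension. The paper carries out the four cases (your $\mathcal F_{(c)}$ is its Case~1--1, and your sketched ``codimension-$\ge 2$ strata'' are its Cases~1--2, 2--1, 2--2) explicitly; one small imprecision is your word ``precisely'' --- Lemma~\ref{l.key lemma} only gives the implication $\langle A_P,B_Y\rangle=\mathbb C[t_0,t_1]_d\Rightarrow S(Y)$ smooth at $P$, but that direction is all the dimension bound needs.
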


\begin{proof}
Let $\mathcal U$ be the subscheme of $\mathcal S$ parametrizing $(Y,P)$ such that $P=(\ell,p_1+p_2, q_1+\cdots+q_{d-4})\in S(Y)$ with $\{p_1,p_2\}\cap \{q_1,\ldots,q_{d-4}\}\neq\emptyset$, $\ell\not\subset Y$ and $S(Y)$ is singular at $P$. Let us denote by $\psi^s:\mathcal U\rightarrow W$ and $\rho^s:\mathcal U\rightarrow \mathcal F$ the respective restrictions of  the projections $\psi:\mathcal S\rightarrow W$ and  $\rho:\mathcal S\rightarrow \mathcal F$. 
 For the proof, it is enough to show that $\dim \mathcal U\leq \dim W-1$. In fact this implies that if we take a general line $L\subset W$ then each inverse image $(\psi^s)^{-1}(Y)$, $Y\in L$, consists of at most finite elements.  From this and  Theorem \ref{t.smooth} we get the proof.  
 
Take  a point $P=(\ell, p_1+p_2,q_1+\cdots+q_{d-4})\in\mathcal F$ with $\{p_1,p_2\}\cap \{q_1,\ldots,q_{d-4}\}\neq\emptyset$.  Choose a homogeneous coordinates $t_0,\ldots,t_3$ on $\mathbb P^3$ and $g,h, r_i,s_j\in\mathbb C[t_0,t_1]$ so that $\ell=(t_2=t_3=0)$, $P=(\ell, [g],[h])$,  $<g,r_1,r_2>=\mathbb C[t_0,t_1]_2$ and $<h,s_1,\ldots,s_{d-4}>=\mathbb C[t_0,t_1]_{d-4}$ as in Notations~\ref{n.coord WP} and \ref{n.eq}. 

From the arguments in Notation \ref{n.coord WP} we can see that the open subset $\mathcal S_P^o\subset \mathcal S_P$ parametrizing $Y\in W$ such that $\ell\not\subset Y$ and  $P\in S(Y)$ is isomorphic to $\mathbb C^{\dim W-d}$. Clearly, $(\rho^s)^{-1}(P)\subset \mathcal S_P^o$. For $Y\in \mathcal S_P^o$, let $g_{d-1}, h_{d-1}, x_{i_0,i_1,i_2},y_{i_0,i_1,i_2,i_3}$ be as in Lemma \ref{l.key lemma} and  Notation \ref{n.coord WP}. Then 
$$ g_{d-1}=x_{d-1,0,0}t_0^{d-1}+x_{d-2,1,0}t_0^{d-2}t_1+\cdots+x_{0,d-1,0,0}t_1^{d-1}$$ and $$h_{d-1}=y_{d-1,0,0,0}t_0^{d-1}+y_{d-2,1,0,0}t_0^{d-2}t_1+\cdots+y_{0,d-1,0,0}t_1^{d-1}.$$  The affine coordinate ring $R(\mathcal S_P^o)$ of $\mathcal S_P^o$  is a polynomial ring  in variables   
$x_{i_0,i_1,i_2},y_{i_0,i_1,i_2,i_3}$.  In order to find some upper bound of the dimension of  $(\rho^s)^{-1}(P)\subset \mathcal S_P^o$  we are going to describe an ideal in $R(\mathcal S_P^o)$ which defines a subscheme of $\mathcal S_P^0$ containing  $(\rho^s)^{-1}(P)$.   

\medskip
Up to renumbering we may assume that  $p_1=q_1$. 

\medskip
{\bf Case 1: } Suppose that $p_1\neq p_2$. 
 We can set  
\begin{center}  $g=t_0t_1$, $r_1=t_0^2$, $r_2=t_1^2$, $h=t_0h'$\end{center} for some  $h'\in\mathbb C[t_0,t_1]_{d-5}$ so that $p_1=(0:1:0:0)$ and $p_2=(1:0:0:0)$. 
Then 	\begin{align*}A_P&=t_0^2t_1<t_1\mathbb C[t_0,t_1]_{d-4},t_0^2h'>.\end{align*} 

\medskip
{\bf Case 1-1}: Suppose $p_2\not\in\{q_1,\ldots,q_{d-4}\}$. Then $t_1\nmid h'$ and hence   $$A_P=<t_0^2t_1^2\mathbb C[t_0,t_1]_{d-5}, t_0^{d-1}t_1>.$$
 By modulo $ A_P$, 
we have the following  equivalences:
\begin{align*}t_0g_{d-1}&\equiv x_{d-1,0,0}t_0^d+x_{0,d-1,0}t_0t_1^{d-1},\\
	t_1g_{d-1}&\equiv x_{1,d-2,0}t_0t_1^{d-1}+x_{0,d-1,0}t_1^d,\\
	t_0h_{d-1}&\equiv y_{d-1,0,0,0}t_0^d+y_{0,d-1,0,0}t_0t_1^{d-1}, \\
	t_1h_{d-1}&\equiv y_{1,d-2,0,0}t_0t_1^{d-1}+y_{0,d-1,0,0}t_1^d. 
\end{align*}
Therefore for $B_Y=<g^2h, t_0g_{d-1}, t_1g_{d-1},t_0h_{d-1},t_1h_{d-1}>$, we have   $<A_P,B_Y>=\mathbb C[t_0,t_1]_d$ if and only if   the next $4\times 3$ matrix	 
$$M_Y=\begin{pmatrix}  x_{d-1,0,0}& x_{0,d-1,0}&0\\
	0&x_{1,d-2,0}& x_{0,d-1,0}\\
	y_{d-1,0,0,0}& y_{0,d-1,0,0}&0\\
	0&y_{1,d-2,0,0}& y_{0,d-1,0,0}
\end{pmatrix}$$
has rank $3$. The subscheme of $S_P^o$ defined by the  ideal of $R(\mathcal S_P^o)$ generated by $3\times 3$ minors of $M_Y$ has codimension 2 in $S_P$. Therefore  $\dim (\rho^s)^{-1}(P)\leq \dim \mathcal S_P^o-2=\dim W-d-2.$ Let $C_{1,1}$ be the space of $P$ in Case 1-1. Then $C_{1,1}$ has dimension $d+1$, and hence $\dim (\rho^s)^{-1}(C_{1,1})\leq \dim W-1$.

\medskip
{\bf Case 1-2:} Suppose $p_2\in\{q_1,\ldots,q_{d-4}\}$.   Then  $t_1\mid h'$ and hence we have  $$A_P=t_0^2t_1^2\mathbb C[t_0, t_1]_{d-4}.$$ By modulo  $A_P$, we have the following equivalences 
\begin{align*}t_0g_{d-1}&\equiv x_{d-1,0,0}t_0^d+x_{d-2,1,0}t_0^{d-1}t_1+x_{0,d-1,0}t_0t_1^{d-1},\\
	t_1g_{d-1}&\equiv x_{d-1,0,0}t_0^{d-1}t_1+x_{1,d-2,0}t_0t_1^{d-1}+x_{0,d-1,0}t_1^d,\\
	t_0h_{d-1}&\equiv y_{d-1,0,0,0}t_0^d+y_{d-2,1,0,0}t_0^{d-1}t_1+y_{0,d-1,0,0}t_0t_1^{d-1}, \\
	t_1h_{d-1}&\equiv  y_{d-1,0,0,0}t_0^{d-1}t_1+y_{1,d-2,0,0}t_0t_1^{d-1}+y_{0,d-1,0,0}t_1^d. 
\end{align*}
It follows that  $\dim <A_P,B_Y>=\mathbb C[t_0,t_1]_d$ if and only if  the next $4\times 3$ matrix	 
$$M_Y=\begin{pmatrix}  x_{d-1,0,0}& x_{d-2,1,0}&x_{0,d-1,0}&0\\
	0&x_{d-1,0,0}&x_{1,d-2,0}& x_{0,d-1,0}\\
	y_{d-1,0,0,0}&y_{d-2,1,0,0}& y_{0,d-1,0,0}&0\\
	0&y_{d-1,0,0,0}&y_{1,d-2,0,0}& y_{0,d-1,0,0}
\end{pmatrix}$$
has rank 4. 
This shows  $(\rho^s)^{-1}(P)$  has dimension $\leq  \dim \mathcal S_P^o-1=\dim W-d-1.$ Let $C_{1,2}$ be the space of $P$ in Case 1-2. Then $\dim C_{1,2}=d$ which implies that $\dim(\rho^s)^{-1}(C_{1,2})\leq \dim W-1$.

\medskip
{\bf Case 2:} Assume that $p_1=p_2$.  We can set  
\begin{center}  $g=t_0^2$, $r_1=t_0t_1$, $r_2=t_1^2$, $h=t_0h'$\end{center} for some  $h'\in\mathbb C[t_0,t_1]_{d-5}$ so that $p_1=p_2=(0:1:0:0)$ and 
	\begin{align*}A_P&=t_0^3<t_0\mathbb C[t_0,t_1]_{d-4}, t_1^2h'>.\end{align*} 

\medskip
{\bf Case 2-1:} Assume that  $t_0\nmid h'$ so that $p_1=p_2=q_1\neq q_i$ for all $i\geq 2$ up to renumbering.  We have  
$$A_P=<t_0^4\mathbb C[t_0,t_1]_{d-4}, t_0^3t_1^{d-3}>.$$ 
By the same arguments as above, $\dim <A_P,B_Y>=\mathbb C[t_0,t_1]_d$ if and only if  $4\times 3$ matrix	 
$$M_Y=\begin{pmatrix}  x_{1,d-2,0}& x_{0,d-1,0}&0\\
	x_{2,d-3,0}&x_{1,d-2,0}& x_{0,d-1,0}\\
	y_{1,d-2,0,0}& y_{0,d-1,0,0}&0\\
	y_{2,d-3,0,0}&y_{1,d-2,0,0}& y_{0,d-1,0,0}
\end{pmatrix}$$
has rank 3. The subscheme of $\mathcal S_P^o$ defined by the ideal  of $R(\mathcal S_P^o)$ generated by $3\times 3$ minors of $M_Y$ has codimension 2 in $\mathcal S_P^o$. Therefore  $\dim(\rho^s)^{-1}(P)\leq\dim \mathcal S_P^o-2=\dim W-d-2$. The space $C_{2,1}$ of $P$ in Case 2-1 has dimension $d$, and thus $\dim (\rho^s)^{-1}(C_{2,1})\leq  \dim W-2$. 

\medskip
{\bf Case 2-2:}
Assume that $t_0\mid h'$. In this case  $p_1=p_2=q_1=q_i$ for some $i\geq 2$. It is easy to check  $A_P=t_0^4\mathbb C[t_0,t_1]_{d-4}$ 
and hence $<A_P,B_Y>=\mathbb C[t_0,t_1]_d$ if and only if the next matrix $M_Y$ has rank 4:
$$M_Y=\begin{pmatrix}  x_{2,d-3,0}&x_{1,d-2,0}& x_{0,d-1,0}&0\\
	x_{3,d-4,0}&x_{2,d-3,0}&x_{1,d-2,0}& x_{0,d-1,0}\\
	y_{2,d-3,0,0}&y_{1,d-2,0,0}& y_{0,d-1,0,0}&0\\
	y_{3,d-4,0,0}&y_{2,d-3,0,0}&y_{1,d-2,0,0}& y_{,d-1,0,0}
\end{pmatrix}.$$
This shows that $\dim (\rho^s)^{-1}(P)\leq \dim \mathcal S_P^o-1=\dim W-d-1$. Since  the space $C_{2,2}$ of $P$ in Case 2-2 has dimension $d-1$, it follows that $\dim (\rho^s)^{-1}(C_{2,2})\leq \dim W-2$. 
\end{proof}

\begin{theorem}\label{t.singular locus}Let $L\subset W$ be a general line and let $Y\in L$. Assume that $S(Y)$ is singular at  some point $P=(\ell, p_1+q_2, q_1+\cdots+q_{d-4})$ such that $Y$ is smooth at $p_1$ and $p_2$.  Then $p_1\neq p_2$, $\{p_1,p_2\}\cap \{q_1,\ldots,p_{d-4}\}\neq \emptyset$ and $q_1\ldots,q_{d-4}$ are distinct. Moreover if $p_1\in \{q_1,\ldots,q_{d-4}\}$ and $p_2\not\in\{q_1,\ldots,q_{d-4}\}$, then $\hat T_{p_1}Y=\hat T_{p_2}Y$ and $\hat T_{p_1}Y\cap Y$ is a cuspical plane curve. 
\end{theorem}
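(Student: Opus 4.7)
The plan is to combine the dimension estimates from Proposition \ref{p.isolated singularity} with Theorem \ref{t.smooth} and a direct local analysis of the matrix $M_Y$ from Case 1-1 of that proposition.

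\textbf{Step 1: Reductions via codimension counts.} The proof of Proposition \ref{p.isolated singularity} shows $\dim\psi^s(\rho^s)^{-1}(C_{2,\bullet})\le\dim W-2$, so for a general $L\subset W$ no $Y\in L$ has a singular $P$ of the ``Case 2'' type; this forces $p_1\neq p_2$. The condition $\{p_1,p_2\}\cap\{q_j\}\neq\emptyset$ is exactly Theorem \ref{t.smooth}. For the distinctness of $q_1,\ldots,q_{d-4}$: an additional coincidence $q_i=q_j$ drops the dimension of the parameter locus $C_{1,\bullet}$ (Case 1-1 or 1-2) by one, while the singularity condition $\operatorname{rank}(M_Y)\le 2$ retains the same codimension in $\mathcal S_P^o$, since $A_P$ and hence $M_Y$ depend only on the vanishing pattern of $h$ at $p_1,p_2$ along $\ell$. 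Hence the bad locus has image in $W$ of codimension $\ge 2$ and is avoided by general $L$.

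\textbf{Step 2: Equality of tangent planes in Case 1-1.} Assume $p_1\in\{q_j\}$, $p_2\notin\{q_j\}$, and choose coordinates with $g=t_0 t_1$, so $p_1=(0\!:\!1\!:\!0\!:\!0)$, $p_2=(1\!:\!0\!:\!0\!:\!0)$. By Lemma \ref{l.key lemma}, singularity of $S(Y)$ at $P$ is equivalent to the $4\times3$ matrix
\[
M_Y=\begin{pmatrix} a & b & 0 \\ 0 & c & b \\ a' & b' & 0 \\ 0 & c' & b' \end{pmatrix},\quad \bigl(a=x_{d-1,0,0},\, b=x_{0,d-1,0},\, c=x_{1,d-2,0},\text{ primed from } \bar h_{d-1}\bigr)
\]
having rank $\le 2$. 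The $3\times3$ minors on rows $\{1,2,3\}$ and $\{1,3,4\}$ yield $-b(ab'-ba')=0$ and $b'(ab'-ba')=0$; since smoothness of $Y$ at $p_1$ forces $(b,b')\neq(0,0)$, we obtain $ab'-ba'=0$. A direct gradient computation shows $\hat T_{p_1}Y=\{bt_2+b't_3=0\}$ and $\hat T_{p_2}Y=\{at_2+a't_3=0\}$, whence $\hat T_{p_1}Y=\hat T_{p_2}Y$.

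\textbf{Step 3: Cuspidal singularity at $p_1$.} The remaining two minors give $b'c-bc'=0$. Take $b'\neq 0$ (else swap) and parametrize $\hat T_{p_1}Y$ via $t_3=-(b/b')t_2$; dehomogenize at $t_1=1$. From Step 1, $p_1=q_1$ is a simple zero of $h$, so $g^2 h=t_0^3 h'(t_0,1)$ with $h'(0,1)\neq 0$. Substituting and collecting terms,
\[
f\big|_{\hat T_{p_1}Y}=A\,t_0^3+B\,t_0 t_2+C\,t_2^2+\text{(order}\ge 3 \text{ in } (t_0,t_2)\text{)},
\]
with $A=h'(0,1)\neq 0$, $B=(b'c-bc')/b'$, and $C$ a polynomial in $b,b',x_{0,d-2,1},y_{0,d-2,1,0},y_{0,d-2,0,1}$. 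The second proportionality kills $B$, so the $2$-jet of the restriction is $C t_2^2$; together with the nonzero $A t_0^3$ term this is an $A_2$-cusp at $p_1$ whenever $C\neq 0$. The vanishing $C=0$ is an independent codimension-$1$ condition on $\mathcal S_P^o$ beyond the rank condition, so its contribution to $W$ has codim $\ge 2$ and is avoided by general $L$; hence $C\neq 0$ for every relevant $Y\in L$ and the singularity is indeed a cusp.

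\textbf{Main obstacle.} The central work is in Step 3: carefully expanding $t_2\bar g_{d-1}+t_3\bar h_{d-1}$ under $t_3=-(b/b')t_2$ so as to extract the coefficient $B=(b'c-bc')/b'$ and match it cleanly to the minor $cb'-bc'$ of $M_Y$ requires careful bookkeeping of the linear terms of $\bar g_{d-1}, \bar h_{d-1}$. The auxiliary nonvanishing $C\neq 0$ along a general $L$ is a routine further codimension count in the spirit of Proposition \ref{p.isolated singularity}.
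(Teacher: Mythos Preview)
Your proof is correct and follows essentially the same route as the paper: reduce via Theorem~\ref{t.smooth} and the codimension estimates of Proposition~\ref{p.isolated singularity} to Case~1, then analyze the $3\times 3$ minors of the same matrix $M_Y$ to extract the proportionalities $(a,a')\parallel(b,b')\parallel(c,c')$ and read off the tangent-plane equality and the singular local form of $\hat T_{p_1}Y\cap Y$.

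One small difference worth noting: the paper stops after observing that on $\hat T_{p_1}Y$ the linear-in-$t_0$ part of the $2$-jet vanishes, so the tangent cone of $\hat T_{p_1}Y\cap Y$ at $p_1$ is a double line; it calls this ``cuspidal'' without pinning down the analytic type. Your Step~3 goes further and shows the singularity is actually $A_2$ by checking that the pure-$t_2^2$ coefficient $C$ (built from $x_{0,d-2,1},y_{0,d-2,1,0},y_{0,d-2,0,1}$) is generically nonzero. That refinement is legitimate—those coefficients are independent of the entries of $M_Y$, so $C=0$ is an honest extra codimension-one condition—but it is not something the paper proves or needs for its stated conclusion. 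If you only want to match the paper, you may drop the $C\neq 0$ argument; if you keep it, you have a slightly sharper statement.

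Two minor points of bookkeeping: in Step~2 you invoke smoothness at $p_1$ to get $(b,b')\neq(0,0)$, but the deduction $cb'-bc'=0$ from the remaining minors also uses $(a,a')\neq(0,0)$, i.e.\ smoothness at $p_2$; and in Step~1 your claim that $A_P$ (hence $M_Y$) is unchanged under further coincidences among the $q_j$ away from $p_1,p_2$ is correct, since in Case~1-1 one has $A_P=t_0^2t_1\,\mathbb C[t_0,t_1]_{d-3}$ regardless of the multiplicity of $t_0$ in $h'$, but it is worth saying this explicitly.
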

\begin{proof}Use the same  notation in the proof of Proposition \ref{p.isolated singularity}. 
	
	Assume that $S(Y)$ is singular at  some point $P=(\ell, p_1+q_2, q_1+\cdots+q_{d-4})$ such that $Y$ is smooth at $p_1$ and $p_2$. By Theorem \ref{t.smooth} we have   $\{p_1,p_2\}\cap \{q_1,\ldots,q_{d-4}\}\neq\emptyset.$ 
	By proof of Proposition \ref{p.isolated singularity}  we proved that $\dim \psi^s((\rho^s)^{-1}(C_{2,i}))\leq \dim W-2$  so that $L\cap \psi^s((\rho^s)^{-1}(C_{2,i}))=\emptyset $ for $i=1,2$. Therefore   $P\in C_{1,1}\cup C_{1,2}$ and hence $p_1\neq p_2$. 
 Let $D_{1,i}\subset C_{1,i}$ be the space of $P\in C_{1,i}$ such that $q_j$ are not distinct. Then $\dim D_{1,1}=d$ and $\dim D_{1,2}=d-1$. By the dimension bound of the fiber of $\rho^s$ in the proof of Proposition \ref{p.isolated singularity} it follows that $\dim \psi^s((\rho^s)^{-1}(D_{1,i}))\leq \dim W-2$ so that 
$ \psi^s((\rho^s)^{-1}(D_{1,i}))\cap L=\emptyset $. So we get the first statement.

 \medskip
 Assume that $p_1\in\{q_1,\ldots,q_{d-4}\}$ and $p_2\not\in \{q_1,\ldots,q_{d-4}\}$ so that $P\in C_{1,1}$. In the situation of Case 1-1 in the proof of Proposition \ref{p.isolated singularity}, we proved that $S(Y)$ is singular at $P$  only when 
 $$M_Y=\begin{pmatrix}  x_{d-1,0,0}& x_{0,d-1,0}&0\\
 	0&x_{1,d-2,0}& x_{0,d-1,0}\\
 	y_{d-1,0,0,0}& y_{0,d-1,0,0}&0\\
 	0&y_{1,d-2,0,0}& y_{0,d-1,0,0}
 \end{pmatrix}$$ has rank $<3$. Therefore 
\begin{equation}\label{eq.1}x_{d-1,0,0}=y_{d-1,0,0,0}=0,\end{equation} \begin{equation}\label{eq.2}x_{0,d-1,0}=y_{0,d-1,0}= 0,\end{equation}
or
\begin{equation}\label{eq.3}x_{0,d-1,0}y_{1,d-2,0,0} - x_{1,d-2,0}y_{0,d-1,0,0}=x_{0,d-1,0}y_{d-1,0,0,0} - x_{d-1,0,0}y_{0,d-1,0,0}\end{equation}
\begin{equation*}	=x_{1,d-2,0}y_{d-1,0,0,0} - x_{d-1,0,0}y_{1,d-2,0,0}=0. \end{equation*}
Since $Y$ is smooth at $p_1=(0:1:0:0)$ and $p_2=(1:0:0:0)$, (\ref{eq.1}) and (\ref{eq.2}) do not hold. Therefore (\ref{eq.3}) should be satisfied. This means that the planes determined by \begin{center}$t_2x_{0,d-1,0}+t_3y_{0,d-1,0,0}=0$, $t_2x_{d-1,0,0}+t_3h_{d-1,0,0,0}=0$, and $t_2x_{1,d-2,0}+t_3y_{1,d-2,0,0}=0$\end{center} are equal. 
It is easy to check that the embedded tangent spaces of $Y$ at $p_1$ and $p_2$ are defined by  \begin{center} $\hat T_{p_1}Y=
(t_2x_{0,d-1,0}+t_3y_{0,d-1,0,0}=0)$ and   
 $\hat T_{p_2}Y=(t_2x_{d-1,0,0}+t_3h_{d-1,0,0,0}=0).$
 \end{center} Furthermore  the emdedded tangent space of the intersection curve $\hat T_{p_1}Y\cap Y$ is given by 
$$t_2x_{0,d-1,0}+t_3y_{0,d-1,0,0}=0$$ and   $$t_0(t_2x_{1,d-2,0}+t_3y_{1,d-2,0,0})+t_2^2x_{0,d-2,1}+t_2t_3y_{0,d-2,1,0}+t_3^2y_{0,d-2,0,1}=0.$$
Therefore $\hat T_{p_1}Y=\hat T_{p_2}Y$ and $\hat T_{p_1}Y\cap Y$ has cuspidal singularity at $p_1$. 
\end{proof}

We now want to consider the Fano congruence of a hypersurface with a single node as singularities. 
\begin{theorem}\label{t.singular hypersurface} Let $Y$ be a general singular hypersurface of degree $d$ with a node  $p$. Let $\Gamma_Y$  be the space of $P=(\ell, p_1+p_2, q_1+\cdots+q_{d-4})\in S(Y)$ such that $p\in \{p_1,p_2\}$. Then $\Gamma_Y$  is a smooth  irreducible projective curve and  $S(Y)$ is a surface with a double point of rank 2 along $\Gamma_Y$. 
\end{theorem}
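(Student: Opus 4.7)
The plan is to combine a geometric description of $\Gamma_Y$ with a local singularity analysis at its generic points, building on the framework of Lemma~\ref{l.key lemma}.

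For smoothness and irreducibility of $\Gamma_Y$, I would identify it (up to a finite boundary) with the curve $R_Y = (Y \cap P_p(Y)) \setminus \{p\}$, where $P_p(Y) \subset \mathbb{P}^3$ is the first polar hypersurface of $Y$ with respect to the node $p$: a point $p_2 \in R_Y$ is a smooth point of $Y$ with $p \in \hat T_{p_2} Y$, and from $p_2$ one recovers $P = (\ell, p+p_2, q_1+\cdots+q_{d-4}) \in \Gamma_Y$ by taking $\ell = \overline{p\,p_2}$ and the residual divisor $\ell \cap Y - 2p - 2p_2$. Since the Hessian of $f$ is nondegenerate at the node, $P_p(Y)$ is smooth at $p$, and a Bertini-type argument applied to the universal polar family over the nodal discriminant $W_p \subset W$ shows $R_Y$ is smooth and irreducible for general $Y$; this gives $\Gamma_Y$ the required structure.

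For a generic $P_0 \in \Gamma_Y$, I would work in the setup of Notations~\ref{n.coord WP} and~\ref{n.eq} with $p = p_1 = (1{:}0{:}0{:}0)$, $p_2 = (0{:}1{:}0{:}0)$, $g = t_0 t_1$, $r_1 = t_0^2$, $r_2 = t_1^2$, and generic $h$ satisfying $h(1,0), h(0,1) \neq 0$. The node condition forces $x_{d-1,0,0} = y_{d-1,0,0,0} = 0$, so in the matrix $M_Y$ of Case~1 in the proof of Theorem~\ref{t.smooth} the first column vanishes and $\dim\langle A_P, B_Y\rangle = d$; the dimension bound in the proof of Lemma~\ref{l.key lemma} then yields $\dim T_{P_0} S(Y) = 3$, so $S(Y)$ is locally a hypersurface singularity in a smooth three-fold. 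The key technical step is to extend the expansion of $f|_{\ell(a,b)} - (g_u)^2 h_v$ to second order: modulo $A_P$ the quadratic part becomes $\alpha_1 t_0^d + \alpha_2 t_1^d$ with
\[
\alpha_1 = x_{d-2,0,1}\,a_0^2 + y_{d-2,0,1,0}\,a_0 b_0 + y_{d-2,0,0,1}\,b_0^2 - h(1,0)\,u_1^2.
\]
Since the cokernel of the linearization is spanned by the class of $t_0^d$ (the $t_1^d$-direction being covered by $(a_1,b_1)$), the reduced local defining equation of $S(Y)$ in the three-dimensional tangent space at $P_0$ is $\alpha_1$ restricted to the kernel of the linearization. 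Solving for $(u_1,u_2,v)$ on that kernel forces $u_1 = (a_0\,x_{d-2,1,0} + b_0\,y_{d-2,1,0,0})/(2 h(1,0))$, independently of the third kernel parameter, and substituting identifies $\alpha_1$ restricted to the kernel with the restriction of the tangent cone $Q_p$ of $Y$ at $p$ to the polar hyperplane of the direction $(1,0,0)$ of $\ell_0$ in $T_p \mathbb{P}^3$.

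The node condition makes $Q_p$ nondegenerate, and $Q_p(1,0,0) = h(1,0) \neq 0$ says the direction of $\ell_0$ is non-isotropic for $Q_p$; the orthogonal decomposition $\mathbb{C}^3 = (\text{polar of } \ell_0) \oplus \mathbb{C}\,(1,0,0)$ then forces the restriction of $Q_p$ to the polar to have rank exactly $2$. Hence $S(Y)$ is locally defined by a rank-$2$ quadric plus higher-order terms in a smooth three-fold, i.e.\ a double point of rank $2$, and its singular locus $\{a_0 = b_0 = 0\}$ is exactly the condition $\ell \ni p$, that is, $\Gamma_Y$ locally. Combined with the first step this gives the theorem. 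The main obstacle I expect is the explicit second-order expansion together with the observation that $u_1$ on the kernel is independent of the third kernel parameter; this accident is what lowers the rank from~$3$ to~$2$ and forces the singularity to propagate along $\Gamma_Y$ rather than being isolated.
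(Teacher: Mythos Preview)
Your identification of the local Hessian with the restriction of the nodal tangent cone $Q_p$ to the polar hyperplane of the line direction is correct and more conceptual than a raw coefficient calculation; it explains transparently why the rank is exactly~$2$ at a generic $P_0\in\Gamma_Y$. But both halves of your argument only cover a dense open subset of $\Gamma_Y$: the polar-curve model $R_Y$ and the local analysis require $p_1=p\neq p_2$, $\{p_1,p_2\}\cap\{q_j\}=\emptyset$, and in particular the non-isotropy $Q_p(\mathrm{dir}\,\ell)=h(1,0)\neq0$. The finitely many points of $\Gamma_Y$ where $\ell$ lies on the tangent cone at $p$ (forcing $p\in\{q_j\}$ or $p_1=p_2=p$), or where $\ell$ is an inflectional tangent at $p_2$, are not handled; there your formula for $\alpha_1$ and the orthogonal-decomposition argument break down, and nothing you wrote excludes the multiplicity jumping or the rank dropping to~$1$. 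Since the theorem asserts the conclusion at \emph{every} point of $\Gamma_Y$ (this is exactly what the proof of Theorem~\ref{t.main}(i) uses to confine the remaining singularities to isolated points), this is a genuine gap, and the same issue affects your smoothness claim: smoothness of $R_Y$ does not yield smoothness of its closure $\Gamma_Y$ at those boundary points.

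The paper sidesteps this by working in the incidence variety $\mathcal H=\{(Y,(p,P)):P\in S(Y),\ Y\text{ singular at }p\}$ over $\mathcal G=\{(p,P):p\in\{p_1,p_2\}\}$. Since $\mathcal H\to\mathcal G$ is a $\mathbb P^{\dim W-d-2}$-bundle, $\mathcal H$ is smooth irreducible of dimension $\dim W$, so general fibers $\Gamma_Y=\psi_1^{-1}(Y)$ are smooth irreducible curves \emph{everywhere}, with no separate treatment of boundary points needed. For the singularity type, instead of fixing $Y$ and varying $P_0\in\Gamma_Y$, the paper fixes $Q=(p,P)\in\mathcal G$ and varies $Y\in\psi_2^{-1}(Q)$, showing that the locus where the quadratic part has rank $\le1$ has codimension $\ge2$ in that fiber; doing this in both the case $p_1\neq p_2$ and the case $p_1=p_2$ (which together cover $\mathcal G$ up to codimension~$2$) yields $\dim\mathcal Z\le\dim W-2$, so a general nodal $Y$ lies outside $\psi_1(\mathcal Z)$ and the rank is $\ge2$ at every point of $\Gamma_Y$. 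Your tangent-cone identification would slot in nicely as the computation in the main case, but it cannot replace this family codimension count at the special points.
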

\begin{proof}	
	Let $\mathcal  G$ be the space of the pairs $(p,P)\in\mathbb P^3\times\mathcal   F$  such that $P= (\ell, p_1+p_2,q_1+\cdots+q_{d-4})$ with  $p\in\{p_1,p_2\}$. It is easy to check that  $\mathcal G$ is smooth irreducible of  dimension $d+2$. 
	Let $\mathcal H$ be the space of $(Y, (p,P))\in W\times\mathcal G$ such that  $P\in S(Y)$ and $Y$ is singular at $p$. 
	Let us denote by $\psi_1:\mathcal H\rightarrow W$ and $\psi_2:\mathcal H\rightarrow \mathcal G$ the  natural projection morphisms.

	Take a point   $Q=(p,P)\in \mathcal G$ where $P=(\ell, p_1+p_2,q_1+\cdots+q_{d-4})$. Choose a homogeneous coordinate system $t_0,\ldots,t_3$ on $\mathbb P^3$ and $g,h\in\mathbb C[t_0,t_1]$ so that $p=(1:0:0:0)$, $\ell=(t_2=t_3=0)$, and $P=(\ell, [g],[h])$.  
 
 For $Y\in\psi_1 \psi_2^{-1}(Q)$, let $x_{i_0,i_1,i_2}$ and $y_{i_0,i_1,i_2,i_3}$ be as in Notation~\ref{n.coord WP}.  Since $Y$ is singular at $p=(1:0:0:0)$,  $x_{d-1,0,0}=y_{d-1,0,0,0}=0$ which shows that the fiber $\psi_2^{-1}(Q)$  is  a codimension 2 linear subspace of $\mathcal S_P\cong\mathbb P^{\dim W-d}$.  Therefore $\mathcal H$ is smooth and irreducible projective variety of dimension $\dim W$, and hence for general $Y\in \psi_1(\psi_2^{-1}(Q))$, $\psi_1^{-1}(Y)$ which is isomorphic to $\Gamma_Y$ is a smooth irreducible projective curve.

	Let $\mathcal Z \subset\mathcal H$ be a subscheme such that  $\mathcal H\setminus \mathcal Z$ is the space of $( Y,(p,P))\in\mathcal H$ such that $S(Y)$ is a surface with a double point of rank $\geq 2$ at $P$. 
	  For the proof of the last statement  it is enough to show that  $\dim \mathcal Z\leq \dim W-2$. In fact this implies that $\dim \psi_1(\mathcal Z)\leq \dim W-2$. So if we take a general  $Y\in W\setminus \psi_1(\mathcal Z)$ with a single node, $S(Y)$ has a double point  of rank $\geq 2$ along $\Gamma_Y$. Since $S(Y)$ is singular along a smooth curve $\Gamma_Y$, the rank of its double points should be 2. This gives the last statement in our theorem.

	We now want to show $\dim \mathcal Z\leq \dim W-2$. 
	
	Since $\dim \mathcal G=d+2$, it is enough to show that
	 for each $Q\in\mathcal G$ outside codimension 2 subscheme  of $\mathcal G$, 
	   $\mathcal Z_Q:=\mathcal Z\cap\psi_2^{-1}(Q)$ has codimension $\geq 2$ in  $\psi_2^{-1}(Q)\cong \mathbb P^{\dim W-d-2}$. Thus we only need to consider $Q$  in  the following two cases 1 and 2. Indeed the space of $Q$ not belonging to one of these two cases has codimension $\geq 2$ in $\mathcal G$. 
	
	Take  a point $Q=(p,P)\in \mathcal G$  so that  $P=(\ell, p_1+p_2,q_1+\cdots+q_{d-4})$ with $p\in\{p_1,p_2\}$. Choose a homogeneous coordinate system $t_0,\ldots,t_3$ on $\mathbb P^3$ and $g,h\in\mathbb C[t_0,t_1]$ so that $p=(1:0:0:0)$, $\ell=(t_2=t_3=0)$, and $P=(\ell, [g],[h])$. By renumbering we can set $p=p_1$.

     \medskip
    {\bf Case 1:} Assume that $p_1\neq p_2$ and $\{p_1,p_2\}\not\subset\{q_1,\ldots,q_{d-4}\}$. We only prove the case  $p_1\not\in \{q_1,\ldots,q_{d-4}\}$. The other case can be proved in a similar method. 
    So we  set    
	$g=t_0t_1$ so that $p=p_1=(1:0:0:0)$ and $p_2=(0:1:0:0)$. Since $p_1\not\in \{q_1,\ldots,q_{d-4}\}$, $t_1\nmid h$ and hence we can set   
	$$h=h_{d-4,0}t_0^{d-4}+\cdots+h_{1,d-5}t_0t_1^{d-5}+h_{0,d-4}t_1^{d-4}$$ for some  $h_{i,j}\in\mathbb C$ and $h_{d-4,0}=1$.  Let \begin{center}$r_1=t_0^2, r_2=t_1^2,  s_1=t_0^{d-5}t_1, s_2=t_0^{d-6}t_1^2 ,\ldots,s_{d-4}=t_1^{d-4}$ \end{center}so that $<g,r_1,r_2>=\mathbb C[t_0,t_1]_2$ and $<h,s_1,\ldots,s_{d-4}>=\mathbb C[t_0,t_1]_{d-4}$. 	
	
	For $Y\in \psi_1(\psi_2^{-1}(Q))$, let $f$, $x_{i_0,i_1,i_2}$ and $y_{i_0,i_1,i_2,i_3}$ be as in Notation~\ref{n.coord WP}.  Let us use $\{x_{i_0,i_1,i_2}, y_{i_0,i_1,i_2,i_3} \}\setminus \{x_{d-1,0,0}, y_{d-1,0,0,0}\}$ as  variables of the homogeneous coordinate ring of $\psi_2^{-1}(Q)\cong \mathbb P^{\dim W-d-2}$. 
	
	Choose an affine coordinate system $a_0,a_1,b_0,b_1,u_1,u_2,v_1,\ldots,v_{d-4}$ on an affine open neighborhood $U\cong\mathbb C^{d+2}$ of $P$ in $\mathcal F$ so that $(a_0,a_1,b_0,b_1,u_1,u_2,v_1,\ldots,v_{d-4})\in U$ represents $P'=(\ell', p_1'+p_2', q_1'+\cdots+q_{d-4}')\in \mathcal F$ such that 
	$\ell'=(t_2=a_0t_0+a_1t_1, t_3=b_0t_0+b_1t_1)$,  
	$$\ell'\cap (g+u_1r_1+u_2r_2=0)=p_1'+p_2'\mbox{ and }\ell'\cap (h+v_1s_1+\cdots+v_{d-4}r_{d-4}=0)=q_1'+\cdots+q_{d-4}'.$$ 
		Note that $(a_0,a_1,b_0,b_1,u_1,u_2,v_1,\ldots,v_{d-4})\in S(Y)\cap U$ if and only if 
	\begin{align*}
		&f(t_0,t_1,a_0t_0+a_1t_1, b_0t_0+b_1t_1)=\lambda(g+u_1r_1+u_2r_2)^2(h+v_1s_1+\cdots+v_{d-4}s_{d-4})
	\end{align*}
	for some $\lambda\in\mathbb C^*$. 	
	Using this and the implicity function theorem,  in some suitable analytic neighborhood of $P$, we can  identify $S(Y)$ with the analytic  subscheme of  $U\cong C^{d+2}_{a_0,a_1,b_0,b_1,u_1,u_2,v_1,\ldots,v_{d-4}}$  defined by zeros of  $d$ analytic equations with smallest terms as  follows: 	
	\begin{align*}
		u_1^2&=a_0^2x_{d-2,0,1}+a_0b_0y_{d-2,0,1,0}+b_0^2y_{d-2,0,0,1}\\
		2u_1&=a_0x_{d-2,1,0}+a_1x_{d-1,0,0}+b_0y_{d-2,1,0,0}+b_1y_{d-1,0,0,0}\\	
		2u_1h_{d-6,2}+2u_2h_{d-4,0}+v_1&=a_0x_{d-4,3,0}+a_1x_{d-3,2,0}+b_0 y_{d-5,1,0,0}+b_1y_{d-3,2,0,0}\\
		&\vdots\\
		2u_1h_{d-k-3,k-1}+2u_2h_{d-k-1,k-3}+v_{k-2}&=a_0x_{d-k-1,k,0}+a_1x_{d-k,k-1,0}+b_0 y_{d-k-1,k,0}+b_1y_{d-k,k-1,0,0}\\
		&\vdots\\
		2u_1h_{0,d-4}+2u_2h_{2,d-6}+v_{d-5}&=a_0x_{2,d-3,0}+a_1x_{3,d-4,0}+b_0y_{2,d-3,0,0}+b_1y_{3,d-4,0,0}	\\
		2u_2h_{1,d-5}+v_{d-4}&=a_0x_{1,d-2,0}+a_1x_{2,d-3,0}+b_0y_{1,d-2,0,0}+b_1y_{2,d-3,0,0}\\
		2u_2&=a_0x_{0,d-1,0}+a_1x_{1,d-2,0}+b_0y_{0,d-1,0,0}+b_1y_{1,d-2,0,0}\\
		0&=a_1x_{0,d-1,0}+b_1y_{0,d-1,0,0}.
	\end{align*} 
	
	Let $V\subset U$ be the linear subspace  given by the last  $d-1$ linear equations above. 
	We note that $V$ has dimension $3$ except only when $x_{0,d-1,0}=y_{0,d-1,0,0}=0.$ We assume that $x_{0,d-1,0}\neq0$ or $y_{0,d-1,0,0}\neq 0$ so that  $\dim V=3$.  
	Then $S(Y)$ is a surface with a double point at $P$ and the Zariski tangent space of $S(Y)$ at $P$ can be identified with  $V$. 
	
	Let  $\bf Q$ be the quadratic form on $U\cong\mathbb C^{d+2}_{a_0,a_1,b_0,b_1,u_1,u_2,v_1,\ldots,v_{d-4}}$ given by the following $(d+2)\times (d+2)$ matrix :
	$$\bf Q=\begin{pmatrix} x_{d-2,0,1}& 0& \frac{1}{2}y_{d-2,0,1,0}&0&0&0&\cdots\\
		0&0&0&0&0&0&\cdots\\
		\frac{1}{2}y_{d-2,0,1,0}&0&y_{d-2,0,0,1}&0&0&0&\cdots\\
		0&0&0&0&0&0&\cdots\\
		0&0&0&0&-1&0&\cdots\\
		\vdots&\vdots&\vdots&\vdots&\vdots&\vdots&\vdots\\
	\end{pmatrix}.$$
	Let $\bf Q^*$ be the quadratic form  on $V$ defined by the restriction of $\bf Q$. 
	   If $\bf Q^*$ has rank $\leq 1$, then there are no independent two vectors  $\bf v_1, \bf v_2\in V$ such that ${\bf v_1} {\bf Q} {\bf v_1}^t\neq 0$ and ${\bf v_2} {\bf Q} {\bf v_2}^t\neq 0$.  Take two vectors $${\bf a}=(1,0,0,0,M_1,M_2,L_1,\ldots,L_{d-4})\in V$$ and  $${\bf b}=(0,0,1,0,M_1',M_2',L_1',\ldots,L_{d-4}')\in V.$$   Here $M_i, M_i', L_i, L_i'$ are determined by $d-1$ linear equations above defining $V$.  Since ${\bf a} {\bf Q} {\bf a}^t=x_{d-2,0,1}-M_1^2$, ${\bf b}{\bf Q} {\bf b}^t=y_{d-2,0,0,1}-M_1'^2$ and  $({\bf a}+{\bf b}){\bf Q}({\bf a}+{\bf b})^t=x_{d-2,0,1}+y_{d-2,0,1,0}+y_{d-2,0,0,1}-(M_1+M_1')^2$, it follows that  the rank of $\bf Q^*$ is $\leq 1$ only when $$x_{d-2,0,1}-M_1^2=x_{d-2,0,1}+y_{d-2,0,1,0}+y_{d-2,0,0,1}-(M_1+M_1')^2=0,$$ $$y_{d-2,0,0,1}-M_1'^2=x_{d-2,0,1}+y_{d-2,0,1,0}+y_{d-2,0,0,1}-(M_1+M_1')^2=0,$$ or  $$x_{d-2,0,1}-M_1^2-M_1^2=y_{d-2,0,0,1}-M_1'^2=0.$$ Therefore for any $Y\in \psi_2^{-1}(Q)$ outside codimension 2 subscheme of $\psi_2^{-1}(Q)$, $S(Y)$ is surface  with a double point of rank $\geq 2$ at $P$. 
	
	\medskip 
	{\bf Case 2:} Assume that $p_1=p_2\not\in\{q_1,\ldots,q_{d-4}\}$. We may set that  $g=t_1^2$ and  $$h=h_{d-4,0}t_0^{d-4}+\cdots+h_{1,d-5}t_0t_1^{d-5}+h_{0,d-4}t_1^{d-4}$$ with $h_{d-4,0}=1$ so that $p=p_1=p_2=(1:0:0:0)$. 
		Set \begin{center}$r_1=t_0t_1, r_2=t_0^2,  s_1=t_0^{d-5}t_1, s_2=t_0^{d-6}t_1^2,\ldots,s_{d-5}=t_0t_1^{d-5},s_{d-4}=t_1^{d-4}$ \end{center} so that $<g,r_1,r_2>=\mathbb C[t_0,t_1]_2$ and $<h,s_1,\ldots,s_{d-4}>=\mathbb C[t_0,t_1]_{d-4}$. 
By the same argument as in case 1 we can show that in some analytic neighborhood of $P$, $S(Y)$ is  identified with the analytic subscheme of $U\cong\mathbb C^{d+2}_{a_0,a_1,b_0,b_1,u_1,u_2,v_1,\ldots,v_{d-4}}$ defined by $d$ analytic equations with smallest degree terms as follows: 
	
	\begin{align*}
	u_2^2=&a_0^2x_{d-2,0,1}+a_0b_0y_{d-2,0,1,0}+b_0^2y_{d-2,0,0,1}\\
	0=&a_0x_{d-2,1,0}+a_1x_{d-1,0,0}+b_0y_{d-2,1,0,0}+b_1y_{d-1,0,0,0}\\
	2u_2=&a_0x_{d-3,2,0}+a_1x_{d-2,1,0}+b_0y_{d-3,2,0,0}+b_1y_{d-2,1,0,0}\\
	2u_1+2u_2h_{d-5,1}=&a_0x_{d-4,3,0}+a_1x_{d-3,2,0}+b_0 y_{d-4,3,0,0}+b_1y_{d-3,2,0,0} \\
	2u_1h_{d-6,2}+2u_2h_{d-7,3}+v_1=&a_0x_{d-6,5,0}+a_1x_{d-5,4,0}+b_0 y_{d-6,5,0,0}+b_1y_{d-5,4,0,0} \\
	&\vdots\\
	2u_1h_{d-k-1,k-3}+2u_2h_{d-k-2,k-2}+v_{k-4}=&a_0x_{d-k-1,k,0}+a_1x_{d-k,k-1,0}+b_0 y_{d-k-1,k,0}+b_1y_{d-k,k-1,0,0}\\
	&\vdots\\
	2u_1h_{1,d-5}+2u_2h_{0.d-4}+v_{d-6}=&a_0x_{1,d-2,0}+a_1x_{2,d-3,0}+b_0y_{1,d-2,0,0}+b_1y_{2,d-3,0,0}\\
	2u_1h_{0,d-4}+v_{d-5}=&a_0x_{0,d-1,0}+a_1x_{1,d-2,0}+b_0y_{0,d-1,0,0}+b_1y_{1,d-2,0,0}\\
	v_{d-4}=&a_1x_{0,d-1,0}+b_1y_{0,d-1,0,0}
\end{align*}
	Then the subspace $V\subset U$ determined by the above last $d-1$ linear equations has dimension 3  except only when 	
$x_{d-2,1,0}=x_{d-1,0,0}=y_{d-2,1,0,0}=y_{d-1,0,0,0}=0 $. 
Assume that $V$ has dimension 3. 
By the same reason as before, for $Y\in \psi_2^{-1}(Q)$ outside codimension $\geq 2$  subscheme of $\psi_2^{-1}(Q)$,  $S(Y)$ is a surface with a double point of rank $\geq 2$ at $P$. 
\end{proof}

\section{Proof of Theorem~\ref{t.main}}
We now want to show our main theorem. In fact its proof immediately comes from the previous results. 
 \begin{proof}[Proof of Theorem~\ref{t.main}]For any $Y$ in the outside codimension 2 subscheme of $W$, $Y$ has only isolated singularities and $Y^*$ is an irreducible hypersurface of $(\mathbb P^3)^*$. Therefore if $L\subset W$ is a general line then $S(Y)$ is a projective surface for any $Y\in L$ by Proposition \ref{p.congruence}.
 
 (ii)  comes from  Theorem \ref{t.singular hypersurface}. For the proof of (i) we only need to show that the singular point in Theorem \ref{t.singular locus} is  isolated. Assume that $P=(\ell, p_1+p_2, q_1+\cdots+q_{d-4})$ is a singular point of $S(Y)$, $Y\in L$, such that $Y$ is smooth at $p_1$ and $p_2$. Proposition \ref{p.isolated singularity} says that there are at most finite such $P$. Therefore if $P$ is not an isolated singular point of $S(Y)$, then $Y$ is singular and $P$ should be in the closure of $\Gamma_Y$, which is impossible because  $\Gamma_Y$ is a projective curve (see Theorem \ref{t.singular hypersurface}). 

We now want to show (iii). Let $b\in L\setminus\{b_1,\ldots,b_N\}$. 
 Since $S(Y_b)$ is a projective surface, its class $[S(Y_b)]=c_d(R)$ where $R$ is the vector bundle of rank $d$ on $\mathcal F$ described in Section \ref{s.Fano congruence of bitangents}. Furthermore $S(Y_b)$ has only isolated singularities by (i). Thus by Serre's condition for normality it follows that $S(Y_b)$ is normal. 
 
 Recall that  $\phi_{Y_b}:S(Y_b)\rightarrow B(Y_b)$ is one to one and immersed over $B(Y_b)\setminus \{S_{2^3,1^{d-6}}(Y_b)\cup S_{3,2,1^{d-5}}(Y_b)\}$ (see Theorem~\ref{t.smooth}). Thus in order to prove that $\phi_{Y_b}: S(Y_b)\rightarrow B(Y_b)$ is the normalization, we only need to show that  $\dim S_{2^3,1^{d-6}}(Y_b)=\dim S_{2,3,1^{d-5}}(Y_b)=1$. In fact this implies that $\phi_{Y_b}$ is a finite birational morphism, and hence it  is the normalization because $S(Y_b)$ is normal. Let $\mathcal K\subset W\times\mathbb P(\Sym^3\mathcal I^{\vee})\times_G\mathbb P(\Sym^{d-6}\mathcal I^{\vee})$ be the space of $(Y,\ell,p_1+p_2+p_3, q_1+\cdots+q_{d-6})$ such that  $\ell\cap Y$ contains $2p_1+2p_2+2p_3+q_1+\cdots+q_{d-6}$ as a subscheme. Let $\nu_1:\mathcal K\rightarrow W$ and $\nu_2:\mathcal K\rightarrow G$ be the natural projections. Given $\ell\in G$, $\nu_2^{-1}(\ell)$ is isomorphic to  $\mathbb P^3\times \mathbb P^{d-6}\times\mathbb P^{\dim W-d} $ and hence $\mathcal K$ is smooth irreducible of dimension $\dim W+1$. Therefore general fiber of $\nu_1$ is a smooth irreducible projective curve. Since $\mathcal K$ is irreducible, the fiber $\nu_1^{-1}(Y)$ for all $Y$ outside codimension 2 subscheme of $W$ has dimension 1. It follows that if we take general line $L\subset W$, then $\nu_2(\nu_1^{-1}(Y))=S_{2^3,1^{d-6}}(Y)\subset B(Y)$ has dimension 1 for all $Y\in L$. 
Similarly we can show that $\dim S_{3,2,1^{d-5}}(Y)=1$ for all $Y\in L$.  We are done. 
\end{proof}

\begin{remark}\begin{itemize}
\item[(i)] Applying the same arguments in \cite{L84} it follows  that if $H_1(S(Y),\mathbb Q)\neq 0$ for general $Y$, then $S(Y_b)$ in Theorem~\ref{t.main}~(ii) is irreducible. We expect $S(Y_b)$ to be irreducible.
\item[(ii)] Assume that $d=6$ and $Y$ is smooth. Let $w:X\rightarrow \mathbb P^3$ be the double cover of $\mathbb P^3$ branched on $Y$. Then $X$ is a Fano threefold with $-K_X=2H$ where $H$ is the pullback of the hyperplane class on $\mathbb P^3$. By a conic on $X$ we mean a rational curve on $X$ with $(-K_X)$-degree 2. There are two families of conics in $X$.  One is the family of rational curves $C$ in $X$ such that its image $w(C)$ is a conic in $\mathbb P^3$ meeting $Y$ with even contact order at  each intersection point. This family had been studied in \cite{CV86}. They obtained a result  similar  to Theorem \ref{t.main}, and applied it in order to show that the associated Abel-Jacobi map is an isomorphism for general $Y$. The other family is the space of rational curves $C$ in $X$ such that $C=w^*(\ell)$ for some bitangent line $\ell$ of $Y$. It follows that in the condition of Proposition~\ref{p.congruence} the dimension of this family is equal to the expected dimension 2.
We remark that in contrast to the case in \cite{CV86}, the Abel-Jacobi map associated to the second family of conics on $X$ is trivial. 
\end{itemize}

\end{remark}

\end{document}